\documentclass[11pt]{article}

\usepackage{amsmath,amsthm,amsfonts,amssymb,mathrsfs,epsfig,bm}

\def\ch{\hbox {\rm cosh\,}}

\def\sh{\hbox {\rm sinh\,}}

\newcommand{\rr}{\stackrel {d}{=}}

\renewcommand{\Re}{{\rm I\kern-0.16em R}}

\def\@begintheorem#1#2{\trivlist \item[\hskip \labelsep{\bf #1\ #2}]}
\def\@opargbegintheorem#1#2#3{\trivlist
      \item[\hskip \labelsep{\bf #1\ #2\ (#3)}]}

\newtheorem{proposition}{Proposition}

\newtheorem{thm}[proposition]{Theorem}
\newtheorem{lemma}[proposition]{Lemma}
\newtheorem{corollary}[proposition]{Corollary}
\newtheorem{example}[proposition]{Example}
\newtheorem{remark}[proposition]{Remark}

\def\ch{\hbox {\rm cosh}}
\def\sh{\hbox {\rm sinh}}
\def\e{\hbox {\rm e}}
\def\P{{\bf P}}
\def\R{{\bf R}}
\def\R{{\bf R}}

\def\E{{\bf E}}

\def\cC{{\cal C}}

\def\rp{\right)}
\def\lp{\left(}

\def\al{\alpha}

\def\la{\lambda}
\def\om{\omega}


\begin{document}

\author{
Paavo Salminen\\{\small \AA bo Akademi University,}
	\\{\small Mathematical Department,}
	\\{\small F\"anriksgatan 3 B,}
	\\{\small FIN-20500 \AA bo, Finland,} 
	\\{\small \tt phsalmin@abo.fi}
\and
Marc Yor\\
	{\small Universit\'e Pierre et Marie Curie,}\\
	{\small Laboratoire de Probabilit\'es }\\
	{\small et Mod\`eles al\'eatoires,}\\
	{\small 4, Place Jussieu, Case 188}\\
	{\small F-75252 Paris Cedex 05, France}
}
\vskip5cm

\title{
	On hitting times of affine boundaries by reflecting Brownian motion and Bessel
        processes}

\maketitle

\begin{abstract}
  Firstly, we compute the distribution function for the hitting time of a linear
  time-dependent boundary $t\mapsto a+bt,\ a\geq 0,\,b\in \R,$ by a reflecting Brownian motion. 
  The main tool hereby is Doob's formula which gives the probability that Brownian motion started inside a wedge  does not hit this wedge. Other key ingredients are the time inversion property of Brownian motion and the time reversal property of diffusion bridges. Secondly, this methodology can also be applied for the three dimensional Bessel process. Thirdly, we consider Bessel bridges from 0 to 0 with dimension parameter $\delta>0$ and show that the probability that such a Bessel bridge crosses an affine boundary is equal to the probability that this  Bessel bridge stays below some fixed value.
	\\ \\
	{\rm Keywords:} Reflecting Brownian motion, Bessel process, hitting time, linear boundary, time reversal, time inversion, Brownian bridge, Bessel bridge. 
	\\ \\ 
	{\rm AMS Classification:} 60J65, 60J60.
\end{abstract}

\section{Introduction}
\label{sec1}

Let $B=\{B_t: t\geq 0\}$ denote a real valued Brownian motion and $f:[0,+\infty)\mapsto \R$ a "nice" function. Introduce the first hitting time of $f$ by $B$ via
\begin{equation}
\nonumber	
H_f:= \inf\{t:\, B_t=f(t)\}.
\end{equation}
There are many important applications (e.g. in sequential analysis, see, e.g., Lerche \cite{lerche86}, and mathematical finance, especially in the theory of American options, see, e.g., Peskir and Shiryaev \cite{peskirshiryaev06} Chapter VII) which call for the distribution of $H_f$ for some particular functions $f.$ For linear boundaries 
$f(t)=a+bt$ we refer to Doob \cite{doob49} and Durbin \cite{durbin71}, for square root boundaries $f(t)=\sqrt{a+bt}$ see Breiman \cite{breiman67}, Ciffarelli et al. \cite{cdr73}, Shepp \cite{shepp82}, Delong \cite{delong81},\cite{delong83} and Yor \cite{yor84}, and for parabolic boundaries $f(t)=a+bt^2,$ Groeneboom \cite{groeneboom89}, Salminen \cite{salminen88} and Martin-L\"of \cite{martinlof98}. We refer to Durbin \cite{durbin92} (with appendix by Williams) for a practical method via integral equations to calculate the hitting probabilities of time dependent boundaries. For the method of images, see Lerche \cite{lerche86}. In  Alili and Patie \cite{alilipatie05} a new approach based on functional transformations of processes is developed and its connection with the method of images is studied.

In this paper we compute the distribution of the hitting time of a linear boundary for
\begin{description}
\item[$\bullet$] reflecting Brownian motion (RBM),
\item[$\bullet$] three dimensional Bessel process (BES).
\end{description} 

Our approach is based on a formula by Doob presented in \cite{doob49}, as well as the time inversion property for Brownian motion and Bessel processes, and the time reversal property of diffusion bridges. In fact, the case of RBM has been already studied via Doob's formula and the time inversion by Abundo (\cite{abundo02}; Corollary 3.4) for straight lines with positive slope. For Bessel processes with dimension parameter $\delta>0$ results for hitting times of straight lines through origin can be found in Pitman and Yor 
(\cite{pitmanyor81};  Section 8 p. 332) and in Alili and Patie (\cite{alilipatie10}; Theorem 5.1)  - we review these results below in Remarks \ref{piyo} and Theorem \ref{alpa}, respectively. 

The above description shows that one of the aims of our paper is to relate formulae about these (classical) first hitting times which appear to be rather scattered in the literature. Moreover, seemingly different formulae for the same densities  may be found in different papers but this discrepancy can often be explained, e.g., via the Poisson summation formula.

We now present our main notation. All processes we study are
defined on the canonical space $C$ of continuous functions $\omega:\R_+\mapsto \R.$ 
Let 
$$
{\cal C}_t:=\sigma\{\omega_s:=\omega(s): s\leq t\}
$$  
denote the smallest $\sigma$-algebra making the coordinate mappings up to time $t$ measurable 
and take ${\cal C}$ to be the smallest $\sigma$-algebra including all $\sigma$-algebras ${\cal C}_t,\ t\geq 0.$ Processes are identified in $(C,{\cal C})$ via their probability measures; e.g., 
$\P^{BM}_x,$ $\P^{RBM}_x,$ and $\P^{BES(\delta)}_x$ denote the probabilities under which 
the coordinate process with $\omega_0=x$ is a Brownian motion, a reflecting Brownian motion and a  Bessel process with dimension parameter $\delta$ , respectively. 

Throughout the paper the notation
\[ 
H_{a,b}:=\inf\{t\,:\, \omega_t=a+bt\}
 \]
is used for the hitting time of a linear time dependent boundary. 
Notice that for $x=0$, by the scaling property of Brownian motion and Bessel processes,  
\begin{equation} 
\label{scale1}
 H_{a,b} \rr 
\begin{cases}
b^{-2}H_{ab,1}, & b>0,\\
b^{-2}H_{a|b|,-1},& b<0,
\end{cases}
\end{equation}
and if $f_{a,b}$ denotes the density of  $H_{a,b}$ then
\begin{equation} 
\label{scale2}
 f_{a,b}(u) =
\begin{cases}
b^{-2}f_{ab,1}(b^2u), & b>0,\\
b^{-2}f_{a|b|,-1}(b^2u),& b<0.
\end{cases}
\end{equation}

Also for $x=0,$ the time inversion property - a basic tool in our approach - is valid for Brownian motion and Bessel processes with dimension parameter $\delta>0$ initiated at 0 (for $0<\delta<2$ the Bessel process is assumed to be reflecting at 0), that is, under $\P^{BM}_0$ and $\P^{BES(\delta)}_0$
\begin{equation} 
\label{timeinv}
\{\omega_t: t>0\}\,\rr\,\{t\,\omega_{1/t}:t>0\}.
\end{equation}  
We remark that this property can also be used to derive
the distribution of the last hitting time from the distribution of the first hitting time (and vice versa). Indeed, for $a,b>0$ let 
$$ 
G_{b,a}:=\sup\{t\,:\, \omega_t=b+at\}.
$$ 
Then, under $\P^{BM}_0$ and $\P^{BES(\delta)}_0,$ it holds 
\begin{eqnarray*}
&&
\hskip-1cm
\sup\{t\,:\, \omega_t=b+at\}\,\rr\,\sup\{t>0\,:\, t\omega_{1/t}=b+at\}
\\
&&
\hskip2.6cm
=\sup\{t\,:\, \omega_{1/t}=\frac bt+a\}
\\
&&
\hskip2.6cm
=1/\inf\{u\,:\, \omega_{u}=a+bu\},
\end{eqnarray*}
and, hence, 
\begin{equation} 
\label{timeinv2}
G_{b,a}\,\rr\,1/H_{a,b},
\end{equation}
in other words, e.g., for $a,b>0$
\begin{equation} 
\label{timeinv3} 
\P^{BES(\delta)}_0\left(G_{b,a}<t\right)=\P^{BES(\delta)}_0\left(H_{a,b}>\frac 1 t\right).
\end{equation}

Next we recall well known formulas for the hitting times of constant boundaries by RBM and BES. Firstly,
\begin{equation} 
\label{a0}
\E^{RBM}_0\left(\exp(-\lambda H_{a,0})\right)=\left(\ch(a\sqrt{2\lambda})\right)^{-1}
\end{equation} 
from which using series expansion and  inverting term by term  we obtain 
\begin{equation} 
\label{a01}
\P^{RBM}_0\left(H_{a,0}\in du\right)/du= \frac{-2a}{\sqrt{2\pi
    u^3}}\sum_{k=-\infty}^{\infty}(4k-1)\,{\rm
  e}^{-(4k-1)^2 a^2/2u}.
\end{equation}
Making an elementary parity manipulation one can find a different
  but equivalent form for the density of $H_{a,0}$
\begin{equation} 
\label{a00}
\P^{RBM}_0\left(H_{a,0}\in du\right)/du= \frac{a}{\sqrt{2\pi
    u^3}}\sum_{k=-\infty}^{\infty}(-1)^k(2k+1)\,{\rm
  e}^{-(2k+1)^2 a^2/2u}
\end{equation}
displayed in \cite{borodinsalminen02} p. 355 and 641.

For three dimensional Bessel process it holds
\begin{equation} 
\label{a10}
\E^{BES(3)}_0\left(\exp(-\lambda H_{a,0})\right)=\frac{a\sqrt{2\lambda}}{\sh(a\sqrt{2\lambda})},
\end{equation} 
and inverting 
\begin{eqnarray} 
\label{a11}
&&
\P^{BES(3)}_0\lp H_{a,0}\in du\rp /du
\\
&&
\nonumber
\hskip2cm
= \frac a{\sqrt{2\pi u^5}}\,
\sum_{k=-\infty}^\infty\lp (2k+1)^2a^2-u\rp  {\rm  e}^{-(2k+1)^2a^2/2u},
\end{eqnarray} 
which coincides with the formula presented in \cite{borodinsalminen02} p. 463
letting therein $x\downarrow 0.$ We refer also to Pitman and Yor \cite{pitmanyor03} where the subordinators, denoted by $C_t$ and $S_t,$  generated by the positive  powers of the right hand sides of (\ref{a0}) and (\ref{a10}), respectively, are studied. 

For ease of the reader (and of ourselves !), when comparing certain theta function type series indexed by {\bf Z} or {\bf N}, we indulge in stating the following elementary lemma.

\begin{lemma}
Let $\psi$ be a function defined on odd integers $2k+1,\, k\in{\bf Z}.$ Assume that there exists $\varepsilon$ such that $\psi(2k+1)=\varepsilon\,\psi(-2k-1)$ for all $k\in{\bf Z}.$  Then
\begin{equation}
\label{odd}
\nonumber
\sum_{n=-\infty}^{\infty}\psi(2n+1)=(1+\varepsilon)\sum_{k=0}^{\infty}\psi(2k+1),
\end{equation}
where it is assumed that the sum on the RHS is finite. In particular, if $\psi$ is even, i.e., $\varepsilon=+1$ then
\begin{equation}
\label{even}
\nonumber
\sum_{n=-\infty}^{\infty}\psi(2n+1)=2\sum_{k=0}^{\infty}\psi(2k+1),
\end{equation}
and if $\psi$ is odd, i.e., $\varepsilon=-1$ then
\begin{equation}
\label{odd2}
\nonumber
\sum_{n=-\infty}^{\infty}\psi(2n+1)=0.
\end{equation}
\end{lemma}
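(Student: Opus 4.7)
The proof will be a one-line reindexing argument, so the plan is mainly to lay it out cleanly.

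First I would split the bilateral sum into the non-negative and negative parts,
\[
\sum_{n=-\infty}^{\infty}\psi(2n+1) \;=\; \sum_{n=0}^{\infty}\psi(2n+1) + \sum_{n=-\infty}^{-1}\psi(2n+1),
\]
and in the second piece perform the substitution $n\mapsto -n-1$, so that $n$ runs over the non-negative integers and $2n+1$ is replaced by $-(2n+1)$. This turns the second sum into $\sum_{n=0}^{\infty}\psi(-2n-1)$.

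Next I would observe that the hypothesis $\psi(2k+1)=\varepsilon\,\psi(-2k-1)$ for every $k\in\mathbf{Z}$ is equivalent, upon replacing $k$ by $-k-1$, to the symmetric statement $\psi(-2k-1)=\varepsilon\,\psi(2k+1)$ (the consistency of these two identities forces $\varepsilon^2=1$, which matches the two cases $\varepsilon=\pm 1$ highlighted after the lemma). Applying this to each term of the reindexed sum gives
\[
\sum_{n=0}^{\infty}\psi(-2n-1) \;=\; \varepsilon\sum_{n=0}^{\infty}\psi(2n+1),
\]
where the finiteness assumption on the right-hand sum legitimises the term-by-term manipulation.

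Adding the two pieces yields the desired identity
\[
\sum_{n=-\infty}^{\infty}\psi(2n+1) \;=\; (1+\varepsilon)\sum_{k=0}^{\infty}\psi(2k+1).
\]
The particular cases are then immediate: for $\varepsilon=+1$ (even $\psi$) the factor is $2$, while for $\varepsilon=-1$ (odd $\psi$) the factor vanishes and the bilateral sum is $0$. There is no real obstacle here; the only point requiring a line of care is the legitimacy of reindexing, which is guaranteed by the hypothesis that the one-sided sum $\sum_{k\geq 0}\psi(2k+1)$ is finite (so the bilateral sum is absolutely convergent, or at least well defined as the sum of two convergent halves).
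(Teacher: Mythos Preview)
Your proof is correct; the paper itself does not prove this lemma at all, merely stating it as an ``elementary'' observation for the reader's convenience, so your reindexing argument is exactly the kind of one-line justification the authors had in mind. The observation that the hypothesis forces $\varepsilon^2=1$ (unless $\psi\equiv 0$) is a nice aside not made explicit in the paper.
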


The outline of the paper is as follows:  Doob's formula is discussed in the next section, and in the third section the distribution of  $H_{a,b}$ for RBM is presented. In the fourth section the BES case is treated, and the paper concludes with a fifth section, where the distribution of the maximum of a general Bessel bridge is connected with the distribution of $H_{a,b}$ for this Bessel bridge.  

\section{Doob's formula}
\label{sec2}

In his famous paper on the Kolmogorov-Smirnov test, Doob \cite{doob49} derived the expression for the probability that Brownian motion started inside a space-time wedge does not hit this wedge. We first give some symmetry properties of the function characterizing this probability.

\begin{proposition}
\label{sym0}
For $\al,\beta,a,b\geq 0$ let
\begin{eqnarray}
\label{de1}
&&
\hskip-1cm
G(\al,\beta;a,b):=\P^{BM}_0\left(-(\al +\beta t)\leq \omega_t\leq a+b t \quad \forall\ t\geq
0\right).
\end{eqnarray}
Then  $G$ has the symmetry properties
\begin{equation}
\label{G1}
G(\al,\beta;a,b)=G(\beta,\al;b,a)=G(a,b;\al,\beta),
\end{equation}
and for all $c>0$ the scaling property 
\begin{equation}
\label{G2}
G(\al,\beta;a,b)=G(\frac\al c,\beta c;\frac ac,b c).
\end{equation}
\end{proposition}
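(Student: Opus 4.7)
The plan is to read off each symmetry directly from a well-known distributional identity of Brownian motion: up-down reflection, time inversion, and Brownian scaling. All three identities preserve the law of $\{\omega_t:t\geq 0\}$ under $\P^{BM}_0$, so each one gives an equality of the probabilities that define $G$ once the defining inequalities are transformed accordingly.

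\textbf{Step 1 (symmetry in $(\alpha,\beta)\leftrightarrow (a,b)$).} Under $\P^{BM}_0$, $\{-\omega_t:t\geq 0\}\stackrel{d}{=}\{\omega_t:t\geq 0\}$. Replacing $\omega$ by $-\omega$ in the event $\{-(\alpha+\beta t)\leq \omega_t\leq a+bt\ \forall t\geq 0\}$ turns it into $\{-(a+bt)\leq \omega_t\leq \alpha+\beta t\ \forall t\geq 0\}$, which has probability $G(a,b;\alpha,\beta)$. This gives $G(\alpha,\beta;a,b)=G(a,b;\alpha,\beta)$.

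\textbf{Step 2 (symmetry in $\alpha\leftrightarrow b$, $\beta\leftrightarrow a$).} This is the one place where the time inversion property (\ref{timeinv}), $\{\omega_t:t>0\}\stackrel{d}{=}\{t\,\omega_{1/t}:t>0\}$, is used. Applied to the inequalities, $-(\alpha+\beta t)\leq t\,\omega_{1/t}\leq a+bt$ for all $t>0$ is equivalent, after dividing by $t$ and writing $s=1/t$, to $-(\beta+\alpha s)\leq \omega_s\leq b+as$ for all $s>0$. The $t=0$ (i.e.\ $s=\infty$) endpoint is a set of measure zero, and continuity of paths with $\omega_0=0$ makes the $s=0$ endpoint automatic since $\alpha,\beta,a,b\geq 0$. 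Thus $G(\alpha,\beta;a,b)=G(\beta,\alpha;b,a)$, which together with Step~1 yields (\ref{G1}).

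\textbf{Step 3 (scaling).} Brownian scaling gives $\{\omega_t:t\geq 0\}\stackrel{d}{=}\{c^{-1}\omega_{c^2 t}:t\geq 0\}$ for any $c>0$. Substituting and setting $s=c^2 t$ transforms the event defining $G(\alpha,\beta;a,b)$ into $\{-(c\alpha+(\beta/c)s)\leq \omega_s\leq ca+(b/c)s\ \forall s\geq 0\}$, whose probability is $G(c\alpha,\beta/c;ca,b/c)$. Replacing $c$ by $1/c$ yields (\ref{G2}).

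There is no real obstacle; the only mildly delicate point is checking that the boundary case $t=0$ in the time-inversion argument causes no issue, but this is immediate from $\omega_0=0$ and the non-negativity of $\alpha,\beta,a,b$.
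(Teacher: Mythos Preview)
Your proof is correct and follows essentially the same approach as the paper: spatial reflection for the swap $(\alpha,\beta)\leftrightarrow(a,b)$, time inversion for the swap $(\alpha,\beta)\leftrightarrow(\beta,\alpha)$ and $(a,b)\leftrightarrow(b,a)$, and Brownian scaling for (\ref{G2}). The only difference is cosmetic---you treat the two symmetries in the opposite order and make the $t=0$ endpoint issue explicit, which the paper leaves implicit.
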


\begin{proof}  
The first equality in (\ref{G1}) follows from  time inversion:
\begin{eqnarray*}
&&
\hskip-1cm 
G(\al,\beta;a,b)=\P^{BM}_0\left(-(\al +\beta t)\leq \omega_t\leq a+b t \quad \forall\ t>0
\right)
\\
&&
\hskip1.1cm
= 
\P^{BM}_0\left(-(\al +\beta t)\leq t\omega_{1/t}\leq a+b t \quad \forall\ t>0
\right)
\\
&&
\hskip1.1cm
= 
\P^{BM}_0\left(-(\frac\al t +\beta )\leq \omega_{1/t}\leq \frac at+b \quad \forall\ t>0
\right)
\\
&&
\hskip1.1cm
=G(\beta,\al;b,a).
\end{eqnarray*}
The second equality in (\ref{G1}) is obtained by the spatial symmetry of Brownian motion. For  (\ref{G2}) consider
\begin{eqnarray*}
&&
\hskip-1cm 
G(\al,\beta;a,b)=\P^{BM}_0\left(-(\al +\beta t)\leq \omega_t\leq a+b t \quad \forall\ t>0
\right)
\\
&&
\hskip1.1cm
= \P^{BM}_0\left(-(\al +\beta c^2 t)\leq \omega_{c^2t}\leq a+b c^2t \quad \forall\ t>0
\right)
\\
&&
\hskip1.1cm
= \P^{BM}_0\left(-(\frac \al c +\beta c t)\leq \frac 1 c\,\omega_{c^2t}\leq \frac ac+b ct \quad \forall\ t>0
\right)
\\
&&
\hskip1.1cm
=G(\frac \alpha c,\beta c;\frac ac,bc),
\end{eqnarray*}
where Brownian scaling has been used.
\end{proof}

We now state Doob's formula; for the proof, see \cite{doob49}.
\begin{thm}
For $\al,\beta,a,b\geq 0$ 
\begin{equation}
\label{G}
G(\al,\beta;a,b)=
1-\sum_{k=1}^\infty\left({\rm e}^{-2A_k}+{\rm e}^{-2B_k}-{\rm
  e}^{-2C_k}-{\rm e}^{-2D_k}\right),
\end{equation}
where
$$
A_k:=A_k(\al,\beta;a,b):=k^2ba+(k-1)^2\beta\al+k(k-1)(b\al+a\beta),
$$
$$
B_k:=B_k(\al,\beta;a,b):=(k-1)^2ba+k^2\beta\al+k(k-1)(b\al+a\beta),
$$
$$
C_k:=C_k(\al,\beta;a,b):=k^2(ba+\beta\al)+ k(k-1)b\al+k(k+1)a\beta,
$$
$$
D_k:=D_k(\al,\beta;a,b):=k^2(ba+\beta\al)+ k(k+1)b\al+k(k-1)a\beta.
$$
\end{thm}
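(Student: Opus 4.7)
The plan is to reduce the problem to a bounded-time question about a Brownian bridge on $[0,1]$ and then evaluate the resulting probability by iterated reflection across the two affine boundaries.

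\textbf{Step 1 (Transformation to a Brownian bridge).} Via the classical identity $Y_s := (1-s)\,\omega_{s/(1-s)}$ for $s \in [0,1)$ (extended by $Y_1 := 0$), the process $(Y_s)_{s \in [0,1]}$ under $\P^{BM}_0$ is a Brownian bridge from $0$ to $0$. A direct substitution $t = s/(1-s)$ shows that the wedge condition $-(\alpha + \beta t) \le \omega_t \le a + bt$ transforms into
\begin{equation*}
-\alpha(1-s)-\beta s \,\le\, Y_s \,\le\, a(1-s)+bs \qquad \forall\, s \in [0,1],
\end{equation*}
so that $G(\alpha,\beta;a,b)$ equals the probability that $Y$ stays between the affine boundaries $\tilde\ell_1(s):=a(1-s)+bs$ and $\tilde\ell_2(s):=-\alpha(1-s)-\beta s$ on $[0,1]$.

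\textbf{Step 2 (Reflection principle for the bridge).} For any affine line $\ell(s)=\gamma+\delta s$ with $\gamma>0$, reflection of the bridge across $\ell$ maps a BB from $u$ to $v$ into a BB from $2\gamma-u$ to $2(\gamma+\delta)-v$ \emph{without} any Girsanov correction, since affine reflection preserves the Gaussian covariance of the bridge. A density-ratio argument then gives
\begin{equation*}
\P\bigl(\sup_{s\in[0,1]}(Y_s-\ell(s))\ge 0\bigr)\,=\,\exp\bigl(-2\gamma(\gamma+\delta)\bigr),
\end{equation*}
whose application to $\ell=\tilde\ell_1$ and to $\ell=-\tilde\ell_2$ (after the symmetry $Y\mapsto -Y$) yields the single-boundary hitting probabilities $e^{-2ab}$ and $e^{-2\alpha\beta}$, matching the $k=1$ terms $e^{-2A_1}$ and $e^{-2B_1}$.

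\textbf{Step 3 (Iterated reflections via inclusion-exclusion).} The full quantity $G$ is obtained from inclusion-exclusion on the sequence of boundary crossings. At each successive hitting time the strong Markov property lets one reflect about the currently hit boundary; crucially, the composition $R_{\tilde\ell_1}\circ R_{\tilde\ell_2}$ of the two reflections acts on the $y$-coordinate as a translation by an affine function of $s$, so the iterated images of the endpoints at $s=0$ and $s=1$ form arithmetic progressions indexed by the number of reflections. Feeding each iterated image into the density-ratio argument of Step 2 produces a representation
\begin{equation*}
G(\alpha,\beta;a,b)=1-\sum_\sigma (-1)^{|\sigma|+1}\exp(-2E_\sigma),
\end{equation*}
where $\sigma$ ranges over finite alternating sequences in $\{\tilde\ell_1,\tilde\ell_2\}$ and $E_\sigma$ is a quadratic form in $a,b,\alpha,\beta$ determined by the reflection path.

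\textbf{Step 4 (Identifying the four series).} Partition the alternating sequences $\sigma$ into four families according to the pair (first letter, last letter). Each family is naturally indexed by $k\ge 1$ and, via the translation action of $R_{\tilde\ell_1}R_{\tilde\ell_2}$, produces an exponent $E_\sigma$ quadratic in $k$. Matching coefficients shows that the four families correspond exactly to the four exponents $A_k,B_k,C_k,D_k$ in the statement. The alternating sign $(-1)^{|\sigma|+1}$ resolves into the pattern $+A_k+B_k-C_k-D_k$: odd-length sequences (starting and ending on the same boundary) give the positive contributions $A_k,B_k$, while even-length sequences (starting and ending on opposite boundaries) give the negative contributions $C_k,D_k$.

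\textbf{Main obstacle.} The delicate step is the combinatorial bookkeeping of Step 4: correctly matching each (first letter, last letter) family to the right one of $A_k,B_k,C_k,D_k$ and, in particular, verifying that the mixed $k(k-1)(b\alpha+a\beta)$, $k(k+1)a\beta$ and $k(k-1)b\alpha$ cross-terms emerge with the correct coefficients from the iterated affine translations. Convergence of the series is immediate from the $\exp(-ck^2)$ decay of every term.
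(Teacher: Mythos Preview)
The paper does not actually give a proof of this theorem: immediately after stating it, the authors write ``for the proof, see \cite{doob49}''. So there is no paper-side argument to compare against; Doob's original 1949 derivation is the intended reference.

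Your plan is essentially the classical one and is, in fact, very close to what Doob did. He too passes from the unbounded-time wedge problem to a two-sided affine corridor problem for the standard bridge on $[0,1]$ (this is exactly your Step~1), and then evaluates that corridor probability by the method of images / repeated reflection across the two affine boundaries (your Steps~2--4). Your observations are all correct: affine reflection preserves the bridge law up to a shift of endpoints, the single-boundary bridge formula gives $e^{-2ab}$ and $e^{-2\alpha\beta}$ for the $k=1$ terms, and the composition $R_{\tilde\ell_1}\circ R_{\tilde\ell_2}$ acts as a translation so that the iterated endpoint images lie on arithmetic progressions in $k$, producing quadratic-in-$k$ exponents.

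The only caveat is that Step~4 is stated rather than carried out. In practice the cleanest way to do the bookkeeping is not to ``match coefficients'' a posteriori but to parametrise the images of the origin under the group generated by the two reflections: the reflected start points form the set $\{2ka+2k\alpha,\ 2ka+2(k-1)\alpha:k\in\mathbf{Z}\}$ and similarly for the endpoints with $b,\beta$; plugging each pair into the bridge density ratio $\exp(-2\,\mathrm{start}\cdot\mathrm{end})$ then mechanically delivers $A_k,B_k,C_k,D_k$ with the correct signs. If you flesh out Step~4 along these lines (rather than by coefficient matching, which is error-prone for the mixed $k(k\pm 1)$ terms), the argument is complete. Convergence, as you note, is immediate from the $e^{-ck^2}$ decay.
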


\begin{remark} The symmetry properties of $G$ can, of course, be seen also from the expressions for $A_k, B_k, C_k,$ and $D_k.$ Indeed,
$$
A_k(\al,\beta;a,b)=B_k(a,b;\al,\beta),
$$
and, since,
$$
C_k(\al,\beta;a,b)=k^2(ba+\beta\al)+ k^2(b\al+a\beta)+k(a\beta-b\al)
$$
and
$$
D_k(\al,\beta;a,b)=k^2(ba+\beta\al)+ k^2(b\al+a\beta)-k(a\beta-b\al),
$$
it is easily seen also that
$$
C_k(\al,\beta;a,b)=D_k(a,b;\al,\beta).
$$
Moreover,
$$
A_k(\al,\beta;a,b)=A_k(\beta,\al;b,a),\ B_k(\al,\beta;a,b)=B_k(\beta,\al;b,a)
$$
and
$$
C_k(\al,\beta;a,b)=C_k(\beta,\al;b,a), D_k(\al,\beta;a,b)= D_k(\beta,\al;b,a).
$$
We also note that each of $A_k,B_k,C_k$ and $D_k$ satisfies the same scaling property as $G.$
Notice also that if we introduce
$$
\Upsilon(m,n):=m^2ba+n^2\beta\al+mn(b\al+a\beta)
$$
then
$$
A_k=\Upsilon(k,k-1)\quad {\sl and}\quad B_k=\Upsilon(k-1,k).
$$
\end{remark}

The limiting distribution function (found by Kolmogorov) of the Kolmogorov-Smirnov statistics  is 
$$
F_K(\lambda):= 1+2\sum_{k=1}^\infty(-1)^k\,{\rm e}^{-2k^2 \lambda^2},\ \lambda>0,
$$
and was identified by Doob to be the distribution function of the maximum of the absolute value of a standard Brownian bridge. Moreover, Doob observed that this distribution function is closely related to the probability that RBM never hits a straight line with positive slope. Indeed, taking in (\ref{de1})  $\al=a$ and  $\beta=b$ gives 
\[ 
A_k=B_k=(2k-1)^2ba\quad {\rm and}\quad
 C_k=D_k=(2k)^2ba,
  \]  
and yields this probability as displayed in the following

\begin{corollary} For $a,b\geq 0$ it holds 
\label{doob1}
\begin{eqnarray}
\label{de2}
&&
\hskip-1.5cm
\nonumber
\P^{BM}_0\left(|\omega_t|\leq a+bt\quad \forall\ t\geq
0\right)
=
\P^{RBM}_0\left(H_{a,b}=\infty\right)
\\
&&
\hskip3.5cm
= 1+2\sum_{k=1}^\infty(-1)^k\,{\rm e}^{-2k^2 ab}
\\
&&
\hskip3.5cm
\nonumber
=\Theta^*(2ab/\pi),
\end{eqnarray}
where 
$$
\Theta^*(u):=\sum_{k\in{\bf Z}}(-1)^k\,{\rm e}^{-\pi k^2 u}
$$
is the "modified" Jacobi theta function.
\end{corollary}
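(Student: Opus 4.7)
The strategy is to apply Doob's formula (the preceding theorem) with the particular choice $\alpha=a$ and $\beta=b$, and then to rewrite the resulting series in alternating form.

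First I would justify the opening equality. Under $\P^{BM}_0$ the process $\{|\omega_t|:t\ge 0\}$ has the law of $\P^{RBM}_0$, so
\[
\P^{BM}_0\bigl(|\omega_t|\le a+bt\ \forall t\ge 0\bigr)=\P^{RBM}_0(H_{a,b}=\infty),
\]
and by definition the left-hand side is exactly $G(a,b;a,b)$.

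Next I would compute the coefficients $A_k,B_k,C_k,D_k$ with $\alpha=a,\beta=b$. Substituting in the definitions gives $A_k=k^2ab+(k-1)^2ab+2k(k-1)ab$, which telescopes as $ab[k+(k-1)]^2=(2k-1)^2 ab$, and by the symmetry $A_k(a,b;a,b)=B_k(a,b;a,b)$ noted in the Remark, also $B_k=(2k-1)^2 ab$. Similarly $C_k=2k^2 ab+[k(k-1)+k(k+1)]ab=4k^2 ab=(2k)^2 ab$, and again $C_k=D_k$ here. Doob's formula therefore becomes
\[
G(a,b;a,b)=1-2\sum_{k=1}^\infty e^{-2(2k-1)^2 ab}+2\sum_{k=1}^\infty e^{-2(2k)^2 ab}.
\]

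The final step is to recognize this as the alternating theta-type series. Splitting $\sum_{k\ge 1}(-1)^k e^{-2k^2 ab}$ into even and odd indices gives
\[
1+2\sum_{k=1}^\infty(-1)^k e^{-2k^2 ab}=1-2\sum_{k=1}^\infty e^{-2(2k-1)^2 ab}+2\sum_{k=1}^\infty e^{-2(2k)^2 ab},
\]
which matches the expression above. Rewriting $2k^2 ab=\pi k^2(2ab/\pi)$ then identifies the sum with $\Theta^*(2ab/\pi)$.

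No step is really an obstacle; the only subtle point is the bookkeeping that collapses $A_k=B_k$ to $(2k-1)^2ab$ and $C_k=D_k$ to $(2k)^2 ab$, which is what makes Doob's otherwise asymmetric expression collapse to the clean Kolmogorov series. Convergence of the series (needed to split the difference of sums and to re-index) is automatic from $ab\ge 0$ and the Gaussian decay of the summands.
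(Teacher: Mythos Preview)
Your proposal is correct and follows exactly the paper's approach: set $\alpha=a$, $\beta=b$ in Doob's formula, observe that $A_k=B_k=(2k-1)^2ab$ and $C_k=D_k=(2k)^2ab$, and collect terms. The paper states this computation in a single line before the corollary, while you spell out the algebra and the odd/even re-indexing more explicitly, but the argument is the same.
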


\begin{remark} 
\label{r00}
Trivially, the LHS of (\ref{de2}) equals
$$
\P^{BM}_0\left(\sup_{t\geq 0}(|\omega_t|-bt)\leq a\right)
$$
so that $a\mapsto\Theta^*(2a^2/\pi)$ appears as a distribution function and equals $F_K.$ 
For a more general discussion for such suprema involving Bessel processes, see  
Theorem \ref{t31}.
We note that the scaling property of Brownian motion also yields that the LHS of (\ref{de2}) is a function of the product $ab$ only.
\end{remark}

Using time inversion Abundo \cite{abundo02} (see also Scheike
\cite{scheike92}) computed the probability that a Brownian bridge stays inside 
a space-time wedge. 
Let $x,y\in\R$ and $u>0$ be given, and denote by $\P^{BM}_{x,u,y}$ the probability measure associated 
with Brownian bridge from $x$ to $y$ of length $u.$ The next result is given in \cite{abundo02}, Proposition 3.5 and Theorem 3.3. To make the exposition more self contained we give here a 
proof which differs slightly from the proof in \cite{abundo02}.

\begin{thm}
\label{abundo2}
For $u>0$ and $\al,\beta,a,b\geq 0$ 
\begin{eqnarray}
\label{ae2}
&&
\nonumber
\hskip-1cm 
\P^{BM}_{0,u,y}\left(-(\al + \beta t)\leq \omega_t\leq a +b t\quad \forall\ t\in(0,u)\right)
\\
&&
\hskip2cm 
= G\lp\al,\beta+\frac{\al+y}u;a,b+\frac{a-y}u\rp, 
\end{eqnarray}
where  $-(\al + \beta u)\leq y\leq a + b u $ and $G$ is as defined in (\ref{G}).
In particular, for $\al=a,$ $\beta=b,$ and $|y|\leq a+bu$
\begin{eqnarray}
\label{ae1}
&&
\nonumber
\hskip-1cm 
\P^{BM}_{0,u,y}\left(|\omega_t|\leq a+b t \quad \forall\ t\in(0,u)\right)
\\
&&
\hskip2cm 
= \sum_{n=-\infty}^\infty(-1)^n\,\exp\left(-2n^2 a(b+\frac a
u)+2na\frac{ y}u\right).
\end{eqnarray}
\end{thm}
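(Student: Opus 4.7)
The plan is to combine the time inversion property of Brownian motion (already the workhorse of Proposition~\ref{sym0}) with the simple Markov property, so that the bridge-in-wedge event on $(0,u)$ is converted into an unconditional wedge-avoidance event on $(0,\infty)$ that is computed by Doob's formula (\ref{G}).

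First I would invoke the inversion $\omega_t = t\,\tilde\omega_{1/t}$, under which $\P^{BM}_0$ is invariant. Substituting $s=1/t$, the event $\{-(\al+\be t)\le \omega_t \le a+bt,\ \forall t\in(0,u)\}$ becomes $\{-(\be+\al s)\le \tilde\omega_s \le b+as,\ \forall s>1/u\}$, and the endpoint condition $\omega_u=y$ becomes $\tilde\omega_{1/u}=y/u$. Because the inversion is a sample-path involution of Wiener measure and the Gaussian densities at times $u$ and $1/u$ differ only by a Jacobian that cancels in the conditional probability, one obtains
\begin{equation*}
\P^{BM}_{0,u,y}\bigl(-(\al+\be t)\le\omega_t\le a+bt\ \forall t\in(0,u)\bigr)
=\P^{BM}_0\bigl(-(\be+\al s)\le\tilde\omega_s\le b+as\ \forall s>1/u\,\big|\,\tilde\omega_{1/u}=y/u\bigr).
\end{equation*}
Applying the Markov property at $s=1/u$ and recentering via $\hat\omega_r := \tilde\omega_{r+1/u} - y/u$ (a Brownian motion from $0$), the transformed wedge becomes
\begin{equation*}
-\Bigl(\be+\tfrac{\al+y}{u}+\al r\Bigr)\le\hat\omega_r\le\Bigl(b+\tfrac{a-y}{u}\Bigr)+a r,\qquad r>0,
\end{equation*}
which by the defining formula (\ref{de1}) has probability $G\bigl(\be+(\al+y)/u,\al;b+(a-y)/u,a\bigr)$. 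The first symmetry in (\ref{G1}) then rewrites this as $G\bigl(\al,\be+(\al+y)/u;a,b+(a-y)/u\bigr)$, which is (\ref{ae2}).

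For the symmetric case (\ref{ae1}), with $\al=a$ and $\be=b$, I would substitute into Doob's expansion (\ref{G}) and verify that $A_k,B_k,C_k,D_k$ collapse into a single $\mathbf{Z}$-indexed sum. Using $k^2+(k-1)^2+2k(k-1)=(2k-1)^2$ and $k^2-(k-1)^2=2k-1$ one finds after straightforward rearrangement
\begin{equation*}
-2A_k=-2a(2k-1)^2(b+a/u)+2a(2k-1)y/u,\qquad -2B_k=-2a(2k-1)^2(b+a/u)-2a(2k-1)y/u,
\end{equation*}
with $-2D_k$ and $-2C_k$ supplying the analogous expressions for $n=2k$ and $n=-2k$. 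Reindexing by $n\in\mathbf{Z}$, and tracking signs (the leading $1$ of $G$ furnishes the $n=0$ term; the $-$'s in front of $e^{-2A_k},e^{-2B_k}$ match $(-1)^n$ for odd $n$; the $+$'s in front of $e^{-2C_k},e^{-2D_k}$ match $(-1)^n$ for even $n\ne 0$) produces the claimed closed form.

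The main conceptual step is really the first one: one must be sure that the sample-path involution implementing time inversion correctly identifies the bridge law $\P^{BM}_{0,u,y}$ with the bridge law $\P^{BM}_{0,1/u,y/u}$ (modulo the spatial rescaling absorbed in the wedge). Once this identification is granted, what remains is the Markov shift and a somewhat tedious but purely mechanical reshuffling of Doob's four-series expansion.
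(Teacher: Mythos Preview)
Your proof is correct and follows essentially the same route as the paper: both convert the finite-interval bridge-in-wedge probability into an unconditional wedge-avoidance probability on $(0,\infty)$ and then read off Doob's formula. The only difference is the ordering of steps---the paper first drift-shifts the bridge to endpoint $0$ and then applies the representation $\{\omega_t;\,0\le t<u\}\rr\{\frac{u-t}{u}\,\omega_{ut/(u-t)};\,0\le t<u\}$ under $\P^{BM}_0$, whereas you time-invert $\P^{BM}_{0,u,y}$ directly and recentre via the Markov property, which is why you need the symmetry (\ref{G1}) at the end.
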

\begin{proof} Using 
$$
\left(\{\omega_t;0\leq t\leq u\}, \P^{BM}_{0,u,y}\right) 
\rr
\left(\{\omega_t+\frac y u t;0\leq t\leq u\}, \P^{BM}_{0,u,0}\right) 
$$
we write
\begin{eqnarray*}
&&
\hskip-.5cm
\P^{BM}_{0,u,y}\left(-\al-\beta t\leq\omega_t\leq a+b t \quad \forall\ t\in(0,u)\right)
\\
&&
\hskip1cm 
= \P^{BM}_{0,u,0}\left(-\al-\beta t\leq\omega_t+\frac y u t\leq a+b t\quad \forall\ t\in(0,u)\right) 
\\
&&
\hskip1cm 
= \P^{BM}_{0,u,0}\left(-\al-(\beta +\frac y u) t\leq\omega_t\leq a+(b-\frac y u) t\quad \forall\ t\in(0,u)\right).
\end{eqnarray*}
To proceed recall

$$
\left(\{\omega_t;0\leq t< u\}, \P^{BM}_{0,u,0}\right) 
\rr
\left(\left\{\frac{u-t}u\,\omega_{ut/(u-t)};0\leq t< u\right\},\P^{BM}_{0}\right)
$$
and, hence,
\begin{eqnarray*}
&&
\hskip-.5cm
 \P^{BM}_{0,u,0}\left(-\al-(\beta +\frac y u) t\leq\omega_t\leq a+(b-\frac y u) t\quad \forall\ t\in(0,u)\right)
\\
&&
\hskip1cm 
= \P^{BM}_{0}\left(-\al-(\beta +\frac y u) t\leq\frac{u-t}u\,\omega_{ut/(u-t)}\leq a+(b-\frac y u) t\quad \forall\ t\in(0,u)\right)
\\
&&
\hskip1cm 
= \P^{BM}_{0}\left(-\al-(\beta +\frac y u) \frac{vu}{v+u}\leq\frac u{v+u}\,\omega_{v}\leq a+(b-\frac y u)\frac{vu}{v+u} \quad \forall\ v\geq 0\right) 
\\
&&
\hskip1cm 
= \P^{BM}_{0}\left(-\al-(\al+\beta u + y)\frac v u \leq\omega_{v}\leq a+(a+b u-y)\frac v u \quad \forall\ v\geq 0\right)
\\
&&
\hskip1cm 
=G\left(\al,\beta+\frac{\al + y}u;a,b+\frac{a- y}u\right).
\end{eqnarray*}
by the symmetry property of $G.$ This proves formula (\ref{ae2}) and (\ref{ae1}) follows immediately. 
\end{proof}

From (\ref{ae1}), integrating over $y$ leads to the distribution function of $H_{a,b},\, a,b\geq 0,$ for RBM, see Abundo \cite{abundo02} Corollary 3.4. In the next section, see Theorem \ref{t3}, we find this distribution also for negative values of $b.$

\begin{remark}
The fact that variables $a,b,y$ and $u$ appear on the RHS of (\ref{ae1}) in such a particular way can be explained again (cf. the last sentence in Remark \ref{r00}) via the scaling property.  Indeed, from the proof above (consider the case $u=1$)
\begin{eqnarray*}
&&
\hskip-.5cm
\P^{BM}_{0,1,y}\left(|\omega_t|\leq a+b t \quad \forall\ t\in(0,1)\right)
\\
&&
\hskip1cm 
= \P^{BM}_{0}\left(-a-(a+b  + y) v \leq\omega_{v}\leq a+(a+b -y) v  \quad \forall\ v\geq 0\right)
\\
&&
\hskip1cm 
= \P^{BM}_{0}\left(-a-(a+b  + y) v \leq c\,\omega_{v/c^2}\leq a+(a+b -y) v  \quad \forall\ v\geq 0\right)
\\
&&
\hskip1cm 
= \P^{BM}_{0}\left(-\frac ac-(a+b  + y)c s \leq\omega_{s}\leq \frac ac+(a+b -y)c s  \quad \forall\ s\geq 0\right).
\end{eqnarray*}
Choosing $c=a$ yields
\begin{eqnarray*}
&&
\hskip-1cm
\P^{BM}_{0,1,y}\left(|\omega_t|\leq a+b t \quad \forall\ t\in(0,1)\right)
\\
&&
\hskip1cm 
= \P^{BM}_{0}\left(|\omega_{s}+ays|\leq 1+a(a+b)s  \quad \forall\ s\geq 0\right)
\end{eqnarray*}
showing that the LHS of (\ref{ae1}) can be viewed as a function of $b':=a(a+b)$ and $\mu:=ay$ only. \end{remark}

\section{Distribution of $H_{a,b}$ for a reflecting Brownian motion}
\label{sec3}

In the next theorem we give an explicit form of the distribution
function of $H_{a,b},\, a>0, b\in\R,$ for RBM initiated at 0. For $b>0$ formula (\ref{m02}) can be found in Abundo \cite{abundo02} Corollary 3.4 (however, there seems to be a misprint in formula (3.6) in \cite{abundo02}; 
in the exponential  $n$ should be replaced by $n^2$). Clearly, if RBM is initiated from above the line then the distribution of   $H_{a,b}$ can be obtained from the distribution of $H_{a,b}$ for BM.

\begin{thm}
\label{t3} 
For $a>0$  and $b\in\R$ and $u>0$ such that $u<-(a/b)$ for $b<0$ and there is no constraint if $b>0$ it holds 
\begin{eqnarray}
\label{m02}
&&
\nonumber
\hskip-1cm
\P^{RBM}_0\left(H_{a,b}>u\right)
\\
&&
\hskip-.9cm
=\sum_{k=-\infty}^\infty(-1)^k
\e^{-2k^2ab}\,
\lp\Phi\left(\frac{a+bu-2ka}{\sqrt u}\right)-\Phi\left(-\frac{a+bu-2ka}{\sqrt u}\right)\rp,
\end{eqnarray}
where $\Phi$ denotes the standard normal distribution function, i.e.,
\begin{equation}
\label{normal}
\Phi(x):=\int_{-\infty}^x\frac 1{\sqrt{2\pi}}\,\e^{-y^2/2}dy.
\end{equation}
The density, with the same constraints, is given by 
\begin{eqnarray}
\label{ab-2}
&&
\nonumber
\hskip-.8cm
\P^{RBM}_0\left(H_{a,b}\in du\right)/du
\\
&&
\nonumber
\hskip-.5cm
= \frac{1}{\sqrt{2\pi
    u^3}}\sum_{k=-\infty}^{+\infty}(-1)^k(a-bu-2ka)\,{\rm e}^{-2k^2ab}\,{\rm
  e}^{-\left(a+bu-2ka\right)^2/2u}
  \\
&&
\hskip-.5cm
=:\Delta_{a,b}(u).
\end{eqnarray}
Moreover, in case $b>0$ 
\begin{equation}
\label{nn1}
\P^{RBM}_0\left(\omega_{H_{a,b}}>y\right)=\begin{cases}
\P^{RBM}_0\left(H_{a,b}>\frac
{y-a}{b}\right), & \text {if  $y>a$,}\\ 
1, & \text {if $y\leq a$,}
\end{cases}
\end {equation} 
and for $y>a$
\begin{equation}
\label{nn2}
\P^{RBM}_0\left(\omega_{H_{a,b}}\in dy\right)/dy=
\frac 1 b \Delta_{a,b}(\frac{y-a}b);
\end {equation} 
in case $b<0$ 
\begin{equation}
\label{nn3}
\P^{RBM}_0\left(\omega_{H_{a,b}}<y\right)=\begin{cases}
\P^{RBM}_0\left(H_{a,b}>\frac
{y-a}{b}\right), & \text {if  $y<a$,}\\ 
1, & \text {if $y\geq a$,}
\end{cases}
\end {equation} 
and for $y<a$
\begin{equation}
\label{nn4}
\P^{RBM}_0\left(\omega_{H_{a,b}}\in dy\right)/dy=
\frac 1 {|b|} \Delta_{a,b}(\frac{y-a}b).
\end {equation}
\end{thm}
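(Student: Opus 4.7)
\medskip

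\noindent\textbf{Proof plan for Theorem \ref{t3}.}

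The plan is to reduce the hitting time for RBM to a two-sided barrier crossing problem for BM, then to condition on the terminal value and apply the bridge estimate from Theorem \ref{abundo2}. Specifically, since $|B|\stackrel{d}{=}\text{RBM}$, I would write
\[
\P^{RBM}_0(H_{a,b}>u)=\P^{BM}_0\bigl(|\omega_t|\leq a+bt\ \forall\,t\in[0,u]\bigr),
\]
valid whenever $a+bu\geq 0$, which gives precisely the constraint $u<-a/b$ when $b<0$ and no constraint when $b\geq 0$. Conditioning on $\omega_u=y$ and using the transition density of BM,
\[
\P^{RBM}_0(H_{a,b}>u)=\int_{-(a+bu)}^{a+bu}\P^{BM}_{0,u,y}\bigl(|\omega_t|\leq a+bt\ \forall\,t\in(0,u)\bigr)\,\frac{1}{\sqrt{2\pi u}}\,\e^{-y^2/2u}\,dy.
\]

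Next, I would plug the bridge formula (\ref{ae1}) from Theorem \ref{abundo2} in for the integrand and interchange sum and integral. The $n$-th term then contains
\[
\exp\!\Bigl(-2n^2 a\bigl(b+\tfrac au\bigr)+2na\tfrac{y}{u}-\tfrac{y^2}{2u}\Bigr),
\]
and completing the square in $y$ yields $-\tfrac{1}{2u}(y-2na)^2-2n^2ab$. The integral in $y$ therefore collapses to
\[
\e^{-2n^2ab}\Bigl(\Phi\!\bigl(\tfrac{a+bu-2na}{\sqrt u}\bigr)-\Phi\!\bigl(\tfrac{-(a+bu)-2na}{\sqrt u}\bigr)\Bigr).
\]
To put this in the symmetric form (\ref{m02}), I would split the sum into two pieces and re-index the second by $n\mapsto -n$; since $(-1)^n\e^{-2n^2ab}$ is invariant under this change, the second piece turns into $\sum_n(-1)^n\e^{-2n^2ab}\Phi\bigl(-\tfrac{a+bu-2na}{\sqrt u}\bigr)$, and (\ref{m02}) follows after combining terms.

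For the density (\ref{ab-2}), I would differentiate (\ref{m02}) term by term in $u$; justifying the termwise differentiation is routine via the Gaussian tail of the sum for any $u>0$. Writing $x_k(u):=(a+bu-2ka)/\sqrt u$, the derivative $\tfrac{d}{du}[\Phi(x_k)-\Phi(-x_k)]=2\varphi(x_k)x_k'(u)$ uses the evenness of $\varphi$; a short computation gives $x_k'(u)=(bu-a+2ka)/(2u^{3/2})$, and assembling these pieces (and flipping sign to pass from the survival function to the density) reproduces (\ref{ab-2}).

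The remaining claims (\ref{nn1})--(\ref{nn4}) are immediate once one notes that $\omega_{H_{a,b}}=a+bH_{a,b}$ at the hitting time, so the event $\{\omega_{H_{a,b}}>y\}$ (resp.\ $\{\omega_{H_{a,b}}<y\}$) translates directly into an inequality on $H_{a,b}$, and the densities follow by linear change of variables $y=a+bu$. The main obstacle is the reindexing/symmetrization step: one needs the sum to converge absolutely for $u>0$ so that splitting and reshuffling are legitimate, and one must verify that the pairing $n\leftrightarrow -n$ correctly yields the $\Phi(x)-\Phi(-x)$ shape rather than, say, $2\Phi(x)-1$ with an erroneous constant from the $n=0$ term; keeping careful track of the $k=0$ contribution (which equals $\Phi((a+bu)/\sqrt u)-\Phi(-(a+bu)/\sqrt u)$) avoids this pitfall.
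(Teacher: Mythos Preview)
Your argument is correct, and for the case $b>0$ it coincides exactly with the route the paper attributes to Abundo: condition on the endpoint, apply the bridge identity~(\ref{ae1}), complete the square, and integrate. Your reindexing and the differentiation leading to (\ref{ab-2}) are fine, and (\ref{nn1})--(\ref{nn4}) are indeed immediate from $\omega_{H_{a,b}}=a+bH_{a,b}$.

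The genuine difference with the paper is in the treatment of $b<0$. You apply (\ref{ae1}) directly with a negative slope, which is outside the hypothesis $a,b\ge0$ of Theorem~\ref{abundo2} as stated; this works, but needs one line of justification: in the proof of Theorem~\ref{abundo2} the bridge probability is rewritten as $G\big(a,\,b+\tfrac{a+y}{u};\,a,\,b+\tfrac{a-y}{u}\big)$, and the constraint $|y|\le a+bu$ forces both $b+\tfrac{a\pm y}{u}\ge0$, so Doob's formula still applies even when $b<0$, provided $a+bu>0$. The paper instead avoids this extension entirely: it time-reverses the bridge via $\P^{BM}_{0,u,y}\rr\P^{BM}_{y,u,0}$, which turns the decreasing boundary $a-s$ into the increasing boundary $s+a-u$, and then invokes the general formula~(\ref{ae2}) with manifestly nonnegative parameters, computing the four Doob terms $A_k,B_k,C_k,D_k$ and integrating each against the Gaussian. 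Your route is shorter and more transparent; the paper's route stays strictly inside the stated hypotheses and, more importantly, showcases the time-reversal device that becomes essential in Section~\ref{sec4} for the three-dimensional Bessel process, where no analogue of the shortcut is available.
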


\begin{proof}
We need to prove the result for $b<0$ (for $b>0$ see \cite{abundo02}). For this, by scaling, it is enough to consider 
$H_{a,-1},$ i.e., we take $b=-1$ (cf. (\ref{scale1})). Using  formula (\ref{ae2})  in
Theorem \ref{abundo2} we have for $u<a$
\begin{eqnarray}
\label{e1}
&&
\nonumber
\hskip-.7cm
\P^{RBM}_0\left(H_{a,-1}>u\rp
=\int_{0}^{a-u}\P^{RBM}_0\lp H_{a,-1}>u\,,\,\omega_u\in
dy\rp
\\
&&
\hskip1.7cm
\nonumber
=\int_{0}^{a-u}\P^{RBM}_0\lp H_{a,-1}>u\,|\,\omega_u=y\rp\, \P^{RBM}_0(\omega_u\in dy)
\\
&&
\hskip1.7cm
\nonumber
=\int_{-(a-u)}^{a-u}\P^{BM}_{0,u,y}\lp -(a-s)<\omega_s <a-s \ \ \forall s\in(0,u)\rp
\\
&&
\hskip6.5cm
\nonumber
\times\, \P^{BM}_0(\omega_u\in dy)
\\
&&
\hskip1.7cm
\nonumber
=\int_{-(a-u)}^{a-u}\P^{BM}_{y,u,0}\lp -(s-(u-a))<\omega_s <s-(u-a) \ \ \forall s\in(0,u)\rp
\\
&&
\hskip6.5cm
\times\, \P^{BM}_0(\omega_u\in dy),
\end{eqnarray}
where, in the last step, we have used the time reversal property of
diffusion bridges, see Salminen \cite{salminen97}, i.e., for Brownian bridge we have 
\begin{equation}
\label{timrev}
\lp\{\omega_t:0\leq t\leq u\}, \P^{BM}_{x,u,y}\rp\rr
\lp\{\omega_{u-t}:0\leq t\leq u\}, \P^{BM}_{y,u,x}\rp.
\end{equation}
Using spatial homogeneity and (\ref{ae2}) in  Theorem \ref{abundo2} we obtain  
\begin{eqnarray*}
&&
\P^{BM}_{y,u,0}\lp -(s-(u-a))<\omega_s <s-(u-a) \ \ \forall s\in(0,u)\rp
\\
&& 
\hskip.5cm
=
\P^{BM}_{0,u,-y}\lp -(s-(u-a))-y<\omega_s <s-(u-a)-y \ \ \forall s\in(0,u)\rp.
\\
&& 
\hskip.5cm
=
\P^{BM}_{0,u,-y}\lp -(a-u+y +s)<\omega_s <a-u-y +s \ \ \forall s\in(0,u)\rp.
\\
&& 
\hskip.5cm
=
G(a-u+y,\frac au;a-u-y,\frac au).
\end{eqnarray*}
To write down the explicit form of the function $G$ we use (\ref{G}), and to stress the dependence of $A_k,$ $B_k,$ $C_k,$ and $D_k$ on $y$ we use notations $A_k(y),$ $B_k(y),$ $C_k(y),$ and $D_k(y),$ respectively. It holds 
\begin{eqnarray*}
&&
A_k(y)=\frac au\lp k^2(a-u-y)+(k-1)^2(a-u+y)+2k(k-1)(a-u)\rp
\\
&&
\hskip1.1cm
=-(2k-1)^2a+\frac 1u\lp(2k-1)^2a^2-(2k-1)ay\rp,
\end{eqnarray*}
\begin{eqnarray*}
&&
B_k(y)=\frac au\lp (k-1)^2(a-u-y)+k^2(a-u+y)+2k(k-1)(a-u)\rp
\\
&&
\hskip1.1cm
=-(2k-1)^2a+\frac 1u\lp(2k-1)^2a^2+(2k-1)ay\rp,
\end{eqnarray*}
\begin{eqnarray*}
&&
C_k(y)=\frac au\lp 2k^2(a-u)+k(k-1)(a-u-y)+k(k+1)(a-u+y)\rp
\\
&&
\hskip1.1cm
=-(2k)^2a+\frac 1u\lp(2k)^2a^2+2kay\rp,
\end{eqnarray*}
and
\begin{eqnarray*}
&&
D_k(y)=\frac au\lp 2k^2(a-u)+k(k+1)(a-u-y)+k(k-1)(a-u+y)\rp
\\
&&
\hskip1.15cm
=-(2k)^2a+\frac 1u\lp(2k)^2a^2-2kay\rp.
\end{eqnarray*}
Now we have, see (\ref{e1}), 
\begin{eqnarray}
\label{e2}
&&
\hskip-.8cm
\P^{RBM}_0\left(H_{a,-1}>u\rp
\\
&&
\hskip-.9cm
\nonumber
=
\int_ {-(a-u)}^{a-u}\lp 1-\sum_{k=1}^\infty\left({\rm e}^{-2A_k(y)}+{\rm e}^{-2B_k(y)}-{\rm
  e}^{-2C_k(y)}-{\rm e}^{-2D_k(y)}\right)\rp\,\varphi_u(y)\,dy,
\end{eqnarray}
where
\[ 
\varphi_u(y):=\frac 1{\sqrt{2\pi u}}\,\e^{-y^2/2u}.
\]
Straightforward integration yields:
\begin{eqnarray*}
&&
\hskip-.5cm
\int_ {-(a-u)}^{a-u}{\rm e}^{-2A_k(y)}\,\varphi_u(y)\,dy
\\
&&
= 
\int_ {-(a-u)}^{a-u}\,\frac 1{\sqrt{2\pi u}}\,\exp\lp
-\frac{y^2}{2u}-\frac 2u\lp(2k-1)^2a^2-(2k-1)ay\rp\rp\,dy 
\\
&&
=
 \e^{2(2k-1)^2a}\int_ {-(a-u)}^{a-u}\,\frac 1{\sqrt{2\pi
     u}}\,\e^{-\lp y-2(2k-1)a\rp^2/2u}\, dy
\\
&&
=\e^{2(2k-1)^2a}\lp\Phi\left(\frac{a-u-2(2k-1)a}{\sqrt u}\right)-\Phi\left(\frac{-(a-u)-2(2k-1)a}{\sqrt u}\right)\rp
\\
&&
=:
{\bf A}(k,u),
\end{eqnarray*}

\begin{eqnarray*}
&&
\hskip-.5cm
\int_ {-(a-u)}^{a-u}{\rm e}^{-2B_k(y)}\,\varphi_u(y)\,dy
\\
&&
=\e^{2(2k-1)^2a}\lp\Phi\left(\frac{a-u+2(2k-1)a}{\sqrt u}\right)-\Phi\left(\frac{-(a-u)+2(2k-1)a}{\sqrt u}\right)\rp
\\
&&
=:{\bf B}(k,u),
\end{eqnarray*}

\begin{eqnarray*}
&&
\hskip-1.8cm
\int_ {-(a-u)}^{a-u}{\rm e}^{-2C_k(y)}\,\varphi_u(y)\,dy
\\
&&
=\e^{2(2k)^2a}\lp\Phi\left(\frac{a-u+4ka}{\sqrt u}\right)-\Phi\left(\frac{-(a-u)+4ka}{\sqrt u}\right)\rp
\\
&&
=:{\bf C}(k,u),
\end{eqnarray*}
and
\begin{eqnarray*}
&&
\hskip-1.8cm
\int_ {-(a-u)}^{a-u}{\rm e}^{-2D_k(y)}\,\varphi_u(y)\,dy
\\
&&
=\e^{2(2k)^2a}\lp\Phi\left(\frac{a-u-4ka}{\sqrt u}\right)-\Phi\left(\frac{-(a-u)-4ka}{\sqrt u}\right)\rp
\\
&&
=:{\bf D}(k,u).
\end{eqnarray*} 
Hence, we have 
\begin{eqnarray}
\label{m1}
&&
\nonumber
\hskip-1cm
\P_0^{RBM}\left(H_{a,-1}>u\right)
\\
&&
\hskip.5cm
={\bf I}(0,u)-\sum_{k=1}^\infty\left({\bf A}(k,u)+{\bf B}(k,u)-{\bf C}(k,u)-{\bf D}(k,u)\right),
\end{eqnarray} 
where 
$$
{\bf I}(0,u)=\Phi\left(\frac{a-u}{\sqrt u}\right)-\Phi\left(-\frac{a-u}{\sqrt u}\right).
$$
Rearranging the terms in (\ref{m1}) yields (\ref{m02}) with $b=-1,$ and differentiating term by term leads to the density given in (\ref{ab-2}).
\end{proof}

\begin{remark}
Taking in (\ref{m02}) $b=0$ and differentiating yields formula (\ref{a00}) for the density of the hitting time of a point for RBM. Clearly, the density given in (\ref{ab-2}) also converges as $b\to 0$ to 
(\ref{a00}).
\end{remark}

\section{Distribution of $H_{a,b}$ for a 3-dimensional Bessel process}
\label{sec4}

In this section we derive an expression for the distribution function of $H_{a,b}$ for a 3-dimensional Bessel process initiated at 0. The probability measures associated with a Bessel bridge from $x$ to $y$ of length $u$ with dimension parameter 3 and a killed (at 0) Brownian bridge  are denoted by $\P^{BES}_{x,u,y}$ and $\P^{KBM}_{x,u,y},$ respectively. Recall that, in fact, 
\begin{equation}
\label{id}
\P^{KBM}_{x,u,y}=\P^{BES}_{x,u,y}
\end{equation}
which follows from BES being Doob's $h$-transform of KBM with $h(x)=x,$ i.e.,
for $\Gamma_t\in\cC_t$ and $y>0$
\begin{equation}
\label{idd}
\P^{BES}_{y}(\Gamma_t)=\E^{BM}_{y}\lp\frac{\omega_{t\wedge H_0}}{y}\,;\,\Gamma_t\rp
\end{equation}
(see McKean \cite{mckean63}, Williams \cite{williams74}, Revuz and Yor \cite{revuzyor01} p. 450 and Borodin and Salminen \cite{borodinsalminen02} p. 75). However, for some arguments below it is good to have a different notation since these bridges are conditioned from different processes.   

We start with a proposition which says that a Brownian bridge from $x>0$ to
$y>0$ conditioned not to hit 0 is identical in law with the killed
Brownian bridge. 

\begin{proposition}
\label{p1}   
For $x,y,u>0$ 
\begin{equation}  
\label{abm}
\P^{KBM}_{x,u,y}=\P^{BM}_{x,u,y}\lp\, \cdot\ |\ \omega_t>0 \quad \forall\, 0\leq t\leq u\rp.
\end{equation}  
\end{proposition}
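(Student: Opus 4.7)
The plan is to verify the identity by showing that, for every $t\in(0,u)$, both probability measures induce the same law on $\cC_t$; path continuity of $\omega$ and a monotone class argument then force agreement on all of $\cC_u$, since $\cC_u=\sigma(\bigcup_{t<u}\cC_t)$ and $\{\omega_s>0\ \forall s\in[0,u]\}=\bigcap_{t<u}\{H_0>t\}$, where $H_0$ is the first hitting time of $0$.

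First I would use the familiar Radon-Nikodym derivative of $\P^{BM}_{x,u,y}$ with respect to $\P^{BM}_{x}$ on $\cC_t$, namely $p_{u-t}(\omega_t,y)/p_u(x,y)$ with $p_s(z,y):=(2\pi s)^{-1/2}\exp(-(z-y)^2/(2s))$. Combined with the Markov property of $\P^{BM}_x$ applied at time $t$, this gives, for any $\Gamma_t\in\cC_t$,
\[
\P^{BM}_{x,u,y}\bigl(\Gamma_t\cap\{H_0>u\}\bigr)=\frac{1}{p_u(x,y)}\,\E^{BM}_x\bigl[\mathbf{1}_{\Gamma_t\cap\{H_0>t\}}\,p^0_{u-t}(\omega_t,y)\bigr],
\]
where $p^0_s(z,y):=p_s(z,y)-p_s(z,-y)$ is the transition density of Brownian motion killed at $0$. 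Taking $\Gamma_t=C$ in this display recovers the classical identity $\P^{BM}_{x,u,y}(H_0>u)=p^0_u(x,y)/p_u(x,y)$.

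Dividing the two displays yields
\[
\P^{BM}_{x,u,y}\bigl(\Gamma_t\,\big|\,\omega_s>0\ \forall s\in[0,u]\bigr)=\frac{1}{p^0_u(x,y)}\,\E^{BM}_x\bigl[\mathbf{1}_{\Gamma_t\cap\{H_0>t\}}\,p^0_{u-t}(\omega_t,y)\bigr],
\]
which is exactly the canonical description of $\P^{KBM}_{x,u,y}$ on $\cC_t$: the law of a Brownian motion issued from $x$, killed at $0$, and Doob-conditioned to arrive at $y$ at time $u$ through the endpoint density $p^0_u(x,y)$. Since this agreement holds for every $t<u$, the monotone class argument above concludes the proof.

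The only points requiring care are (a) invoking the Markov property of the underlying $\P^{BM}_x$ rather than that of the bridge itself, and (b) keeping the two normalising constants $p_u(x,y)$ and $p^0_u(x,y)$ distinct throughout the bookkeeping. There is no substantive obstacle: the result is essentially a consequence of the Markov property together with the standard disintegration formula defining Brownian bridges.
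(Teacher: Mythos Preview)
Your proof is correct and follows essentially the same route as the paper: both arguments unpack the disintegration defining the bridge, apply the Markov property of $\P^{BM}_x$ to split $\{H_0>u\}$ at an intermediate time, divide by the normalizing probability $\P^{BM}_{x,u,y}(H_0>u)=p^0_u(x,y)/p_u(x,y)$, and finish with a monotone class argument. The only cosmetic difference is that the paper works with cylinder sets (finite-dimensional distributions at times $u_1<\dots<u_n<u$) whereas you work with general $\Gamma_t\in\cC_t$ via the Radon--Nikodym density $p_{u-t}(\omega_t,y)/p_u(x,y)$; these two presentations are equivalent.
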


\begin{proof}
For $0<u_1<\dots<u_n<u$ consider 
\begin{eqnarray*}
&&
\hskip-.8cm
\P^{BM}_{x,u,y}\lp \omega_{u_1}\in dz_1,\dots,\omega_{u_n}\in
dz_n\,;\, \om_t>0\quad \forall\ 0\leq t\leq u\rp
\\
&&
\hskip-.2cm
=  
\P^{BM}_{x}\lp \omega_{u_1}\in dz_1,\dots,\omega_{u_n}\in
dz_n, \om_u\in dy\,;\, H_0>u\rp/\P^{BM}_{x}\lp\om_u\in dy\rp
\\
&&
\hskip-.2cm
=  
\P^{KBM}_{x}\lp \omega_{u_1}\in dz_1,\dots,\omega_{u_n}\in
dz_n, \om_u\in dy\rp/\P^{BM}_{x}\lp\om_u\in dy\rp,
\end{eqnarray*}
where $H_0:=\inf\{t: \om_t=0\}.$ Similarly, 
\begin{eqnarray*}
&&
\P^{BM}_{x,u,y}\lp  \om_t>0\quad \forall\ 0\leq t\leq u\rp
=  
\P^{KBM}_{x}\lp \om_u\in dy\rp/\P^{BM}_{x}\lp\om_u\in dy\rp.
\end{eqnarray*}
Consequently,
\begin{eqnarray*}
&&
\hskip-.8cm
\P^{BM}_{x,u,y}\lp \omega_{u_1}\in dz_1,\dots,\omega_{u_n}\in
dz_n\,|\, \om_t>0\quad \forall\ 0\leq t\leq u\rp
\\
&&
\hskip-.2cm
=  
\P^{KBM}_{x}\lp \omega_{u_1}\in dz_1,\dots,\omega_{u_n}\in
dz_n, \om_u\in dy\rp/
\P^{KBM}_{x}\lp \om_u\in dy\rp
\\
&&
\hskip-.2cm
=  
\P^{KBM}_{x,u,y}\lp \omega_{u_1}\in dz_1,\dots,\omega_{u_n}\in
dz_n\rp.
\end{eqnarray*}
Then, the monotone class theorem implies the stated identity. 
\end{proof}

\begin{thm}
\label{t4}
For $a>0$  and $b\in\R$ such that $u<-(a/b)$ for $b<0$ (and there is no constraint if $b>0$) it holds 
\begin{eqnarray}
\label{m11}
&&
\nonumber
\P^{BES}_0\left(H_{a,b}>u\right)
\\
&&
\nonumber
\hskip1cm
=2\sum_{k=-\infty}^\infty
\e^{-2k^2ab}\,\Big[\left(1-4k^2ab\right)
\left(\Phi\left(\frac{a+bu-2ka}{\sqrt{u}}\right)-\Phi\left(-\frac{2ka}{\sqrt{ u}}\right)\right)
\\
&&
\hskip2cm
- \frac{2kbu-a-bu}{\sqrt{u}}\, \varphi\lp\frac{a+bu-2ka}{\sqrt{u}}\right)\Big],
\end{eqnarray}
where $\varphi$ is the standard normal density.
\end{thm}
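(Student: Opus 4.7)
I will adapt the strategy of Theorem~\ref{t3} to the Bessel setting. Conditioning on the endpoint $\omega_u = y$ decomposes the probability as
\begin{equation*}
\P^{BES}_0(H_{a,b} > u) = \int_0^{a+bu} \P^{BES}_{0,u,y}\bigl(\omega_t < a+bt \ \forall\, t \in (0,u)\bigr)\, \sqrt{\tfrac{2}{\pi u^3}}\, y^2\, e^{-y^2/(2u)}\, dy,
\end{equation*}
where the weight is the BES(3) transition density from $0$ at time $u$.

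To compute the inner probability I would use identity (\ref{id}) to replace the Bessel bridge by the killed Brownian bridge $\P^{KBM}_{0,u,y}$, and then Proposition~\ref{p1} (extended to starting point $x = 0$ via the limit $x \to 0^+$) to recast it as a Brownian bridge conditioned on positivity. After translating by $x$ and invoking Theorem~\ref{abundo2}, the numerator of the conditioning quotient equals $G(x, y/u;\, a-x,\, b+(a-y)/u)$, while the denominator $\P^{BM}_{x,u,y}(\omega_t > 0 \ \forall t) = 1 - e^{-2xy/u}$ by the classical reflection formula. The explicit series (\ref{G}) and a telescoping identity give $G|_{x=0} = 0$, so the quotient has a finite limit as $x \to 0^+$, yielding
\begin{equation*}
\P^{BES}_{0,u,y}\bigl(\omega_t < a+bt \ \forall t\bigr) = \frac{u}{2y}\, \bigl(\partial_\alpha G - \partial_{a'} G\bigr)\big|_{\alpha=0,\, \beta=y/u,\, a'=a,\, b'=b+(a-y)/u}.
\end{equation*}

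Substituting the series (\ref{G}) and computing the derivatives of $A_k, B_k, C_k, D_k$ with respect to $\alpha$ and $a'$ at the relevant point, then multiplying by the Gaussian weight $y^2 e^{-y^2/(2u)}$, each contribution reduces to a moment integral of the form
\begin{equation*}
\int_0^{a+bu} P_k(y)\, e^{-(y - \mu_k)^2/(2u)}\, dy
\end{equation*}
with $P_k$ a polynomial of degree at most two and $\mu_k$ proportional to $ka$. The standard antiderivatives of $e^{-z^2/(2u)}$ and $z\, e^{-z^2/(2u)}$ produce, respectively, the CDF terms $\Phi((a+bu-2ka)/\sqrt{u}) - \Phi(-2ka/\sqrt{u})$ and the density term $\varphi((a+bu-2ka)/\sqrt{u})$ displayed in (\ref{m11}). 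After extracting the exponential prefactor $e^{-2k^2 ab}$, consolidating the four Doob families (with $B_k$ and $C_k$ requiring the reindexing $k \leftrightarrow k-1$ to align with $A_k$ and $D_k$), and combining the quadratic and constant moment contributions, the coefficient of the CDF difference simplifies to $(1 - 4k^2 ab)$, completing the proof.

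The main obstacle is the algebraic bookkeeping in this last step: each of the four Doob series contributes both a CDF-difference and a density term after the $y^2$-weighted integration, and the various contributions must be combined via reindexing and cancellation to produce the compact single sum over $k \in {\bf Z}$ appearing in (\ref{m11}). A subsidiary but routine point is the justification of the $x \to 0^+$ limit used to transport Proposition~\ref{p1} to a bridge starting at $0$, which follows from Taylor-expanding $G$ in $x$ around $x = 0$ and the vanishing $G|_{x=0} = 0$.
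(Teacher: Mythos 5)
Your proposal is correct and follows essentially the same route as the paper: condition on the endpoint $\omega_u=y$, pass from the Bessel bridge to the killed Brownian bridge via (\ref{id}) and Proposition~\ref{p1}, evaluate the conditioned probability as the $x\downarrow 0$ limit of $G\bigl(x,y/u;a-x,b+(a-y)/u\bigr)/(1-\e^{-2xy/u})$ by l'H\^opital (using the telescoping cancellation that gives $G|_{x=0}=0$), and then integrate the resulting series against the $\mathrm{BES}(3)$ density by Gaussian moment integrals. The only cosmetic deviation is that the paper treats $b<0$ by first time-reversing the bridge (so the limit is taken at the terminal endpoint, as in (\ref{m12})--(\ref{m13})) and handles $b=\pm1$ separately before extending by scaling, whereas you take the limit at the starting point uniformly in $b$; both lead to the same computation.
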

\begin{proof}
Consider first the case $b<0$ and $u<-(a/b).$ Without loss of generality
we may take $b=-1.$ Proceeding as in the proof
of Theorem \ref{t3} (cf. (\ref{e1})) we have
\begin{eqnarray}
\label{m12}
&&
\nonumber
\hskip-.7cm
\P^{BES}_0\left(H_{a,-1}>u\rp
=\int_{0}^{a-u}\P^{BES}_{y,u,0}\lp \om_s <s+a-u \ \forall s\in(0,u)\rp
\\
&&
\hskip6.5cm
\times\, \P^{BES}_0(\om_u\in dy).
\end{eqnarray}
By (\ref{id}) and (\ref{abm}), 
\begin{eqnarray}
\label{m13}
&&
\nonumber
\hskip-.7cm
\P^{BES}_{y,u,0}\lp \om_s <s+a-u \ \forall s\in(0,u)\rp
\\
&&
\nonumber
\hskip1.5cm
=
\P^{KBM}_{y,u,0}\lp \om_s <s+a-u \ \forall s\in(0,u)\rp
\\
&&
\hskip1.5cm
=
\lim_{x\downarrow 0}\frac{\P^{BM}_{y,u,x}\lp 0<\om_s <s+a-u
  \ \forall s\in(0,u)\rp}
{\P^{BM}_{y,u,x}\lp \om_s>0 \ \forall s\in(0,u)\rp}.
\end{eqnarray}
From Theorem \ref{abundo2} formula (\ref{ae2}) we have for $x,y>0$
\begin{eqnarray}
\label{m14}
&&
\nonumber
\P^{BM}_{y,u,x}\lp 0<\om_s <s+a-u
  \ \forall s\in(0,u)\rp
\\
&&
\nonumber
\hskip1.5cm
= 
 \P^{BM}_{0,u,x-y}\lp -y<\om_s <s+a-u-y
  \ \forall s\in(0,u)\rp
\\
&&
\hskip1.5cm
=G\lp y,\frac xu,a-u-y,\frac{a-x}u\rp ,
\end{eqnarray}
and 
\begin{eqnarray}
\label{m15}
&&
\nonumber
\P^{BM}_{y,u,x}\lp \om_s >0
  \ \forall s\in(0,u)\rp
\\
&&
\nonumber
\hskip1.5cm
= 
\lim_{N\to\infty} \P^{BM}_{0,u,x-y}\lp -y<\om_s <N-y
  \ \forall s\in(0,u)\rp
\\
&&
\hskip1.5cm
=
\lim_{N\to\infty}G\lp y,\frac xu;N-y,\frac{N-x}u\rp .
\end{eqnarray}
From (\ref{m15}) we may recover the well known formula
(cf. \cite{borodinsalminen02} p. 174) 
\begin{equation}
\label{m16}
\P^{BM}_{y,u,x}\lp \om_s >0
  \ \forall s\in(0,u)\rp
=1-\e^{-2xy/u}.
\end{equation}

To write down the explicit form of the function $G$ in (\ref{m14}) we use (\ref{G}). The functions $A_k, B_k, C_k$ and $D_k$ are now denoted by $A_k(x,y), B_k(x,y), C_k(x,y)$ 
and $D_k(x,y)$ and it holds 
\begin{eqnarray*}
&&
\hskip-1cm
A_k(x,y)=\frac 1u\big[ k^2(a-u-y)(a-x)+(k-1)^2xy
\\
&&
\hskip2cm
+k(k-1)((a-u-y)x+(a-x)y)\big],
\end{eqnarray*}
\begin{eqnarray*}
&&
\hskip-1cm
B_k(x,y)=\frac 1u\big[ (k-1)^2(a-u-y)(a-x)+k^2xy
\\
&&
\hskip2cm
+k(k-1)\lp(a-u-y)x+(a-x)y\rp\big],
\end{eqnarray*}
\begin{eqnarray*}
&&
\hskip-.5cm
C_k(x,y)=\frac 1u\big[ k^2\lp(a-u-y)(a-x)+xy\rp
\\
&&
\hskip2cm
+k(k-1)(a-u-y)x+k(k+1)(a-x)y\big],
\end{eqnarray*}
and
\begin{eqnarray*}
&&
\hskip-.5cm
D_k(x,y)=\frac 1u\big[ k^2\lp(a-u-y)(a-x)+xy\rp
\\
&&
\hskip2cm
+k(k+1)(a-u-y)x+k(k-1)(a-x)y\big].
\end{eqnarray*}
Moreover,
$$
A_k(0,y)=D_k(0,y)=\frac au\lp k^2(a-u)-ky\rp,
$$
$$
B_{k+1}(0,y)=C_k(0,y)=\frac au\lp k^2(a-u)+ky\rp,
$$
$$
A'_k:=\frac{\partial A_k}{\partial x}(x,y)
=-k\frac {a-u}u+\frac yu,
$$
$$
B'_k:=\frac{\partial B_k}{\partial x}(x,y)
=(k-1)\frac {a-u}u+\frac yu,
$$
and
$$
C'_k:=\frac{\partial C_k}{\partial x}(x,y)=-D'_k:=
-\frac{\partial D_k}{\partial x}(x,y)
=-k\frac{a-u}{u}.
$$
We calculate the limit in (\ref{m13}) using l'Hospital's rule:
\begin{eqnarray}
\label{m17}
&&
\nonumber
\P^{BES}_{y,u,0}\lp\om_s<s+a-u\quad \forall\ s\in(0,u)\rp
\\
&&
\nonumber
\hskip1cm
=
\lim_{x\downarrow 0} \frac{{ 1-\sum_{k=1}^\infty\left({\rm e}^{-2A_k}+{\rm e}^{-2B_k}-{\rm
  e}^{-2C_k}-{\rm e}^{-2D_k}\right)}}{1-\e^{-2xy/u}}
\\
&&
\nonumber
\hskip1cm
=
\frac{ B'_1\e^{-2B_1(0)}+\sum_{k=1}^\infty\left(\lp A'_k-D'_k\rp{\rm
    e}^{-2A_k(0)}+\lp B'_{k+1}-C'_k\rp{\rm e}^{-2C_k(0)}\right)}{y/u}
\\
&&
\nonumber
\hskip1cm
=
1+\frac{2(a-u)}{y}\,\sum_{k=1}^\infty k\, 
{\rm  e}^{2k^2a}{\rm e}^{-2k^2a^2/u}\lp {\rm  e}^{-2kay/u}- {\rm  e}^{2kay/u}\rp
\\
&&
\nonumber
\hskip4cm
+
\sum_{k=1}^\infty 
{\rm  e}^{2k^2a}{\rm e}^{-2k^2a^2/u}\lp {\rm  e}^{-2kay/u}+ {\rm  e}^{2kay/u}\rp
\\
&&
\nonumber
\hskip1cm
=
\frac{2(a-u)}{y}\,\sum_{k=-\infty}^\infty k\, 
{\rm  e}^{2k^2a}{\rm e}^{-2k^2a^2/u}{\rm  e}^{-2kay/u}
\\
&&
\nonumber
\hskip4cm
+
\sum_{k=-\infty}^\infty 
{\rm  e}^{2k^2a}{\rm e}^{-2k^2a^2/u}{\rm  e}^{-2kay/u}
\\
&&
\hskip1cm
=
\sum_{k=-\infty}^\infty \lp 1-\frac {2k(a-u)}{y}\rp 
{\rm  e}^{2k^2a}{\rm e}^{-2k^2a^2/u}{\rm  e}^{2kay/u},
\end{eqnarray}
where, in the second step, $B_1(0):=B_1(0,y)=0, A_k(0):=A_k(0,y),$ and $C_k(0):=C_k(0,y).$
Now we can proceed by evaluating the integral given in
(\ref{m12}). Recall that  
$$
 \P^{BES}_0(\om_u\in dy)=\frac{2y^2}{\sqrt{2\pi u^3}}\,\e^{-y^2/2u}\,dy,
$$
and consider the integrals
\begin{eqnarray*}
&&
\nonumber
\hskip-1cm
I_1:=\int_0^{a-u} {\rm e}^{-2k^2a^2/u}{\rm  e}^{2kay/u}\, 
\frac{2y^2}{\sqrt{2\pi u^3}}\,\e^{-y^2/2u}\,dy
\\
&&
\nonumber
\hskip-.5cm
=
\int_0^{a-u}
\frac{2y^2}{\sqrt{2\pi u^3}}\,\e^{-(y-2ka)^2/2u}\,dy,
\end{eqnarray*}
and
\begin{eqnarray*}
&&
\nonumber
I_2:=\int_0^{a-u} \frac {2k(a-u)}{y}\,{\rm e}^{-2k^2a^2/u}{\rm  e}^{2kay/u}\, 
\frac{2y^2}{\sqrt{2\pi u^3}}\,\e^{-y^2/2u}\,dy
\\
&&
\nonumber
\hskip.5cm
= 2k(a-u)
\int_0^{a-u}
\frac{2y}{\sqrt{2\pi u^3}}\,\e^{-(y-2ka)^2/2u}\,dy.
\end{eqnarray*}
Writing 
\begin{eqnarray*}
&&
\hskip-1cm
I_1= 
\int_0^{a-u}
\frac{2y(y-2ka)}{\sqrt{2\pi u^3}}\,\e^{-(y-2ka)^2/2u}\,dy
\\
&&
\nonumber
\hskip2cm
+2ka
\int_0^{a-u}
\frac{2y}{\sqrt{2\pi u^3}}\,\e^{-(y-2ka)^2/2u}\,dy
\\
&&
\nonumber
\hskip-.5cm
=: I_{11}+I_{12},
\end{eqnarray*}
and
\begin{eqnarray*}
&&
\hskip-1cm
I_2= 2ka
\int_0^{a-u}
\frac{2y}{\sqrt{2\pi u^3}}\,\e^{-(y-2ka)^2/2u}\,dy
\\
&&
\nonumber
\hskip2cm
- 
2k
\int_0^{a-u}
\frac{2y}{\sqrt{2\pi u}}\,\e^{-(y-2ka)^2/2u}\,dy
\\
&&
\nonumber
\hskip-.5cm
=: I_{21}-I_{22}
\end{eqnarray*}
it is seen that $I_{12}=I_{21}.$ Straightforward calculations yield
$$
I_{11}=-2\,\frac{a-u}{\sqrt{u}}\,\varphi\lp\frac{a-u-2ka}{\sqrt u}\rp+2\lp
\Phi\lp\frac{a-u-2ka}{\sqrt u}\rp
-\Phi\lp-\frac{2ka}{\sqrt u}\rp\rp.
$$
Furthermore,
\begin{eqnarray*}
&&
\hskip-1cm
I_{22}=2k\sqrt{u} \,
\int_0^{a-u}
\frac{2(y-2ka)}{\sqrt{2\pi }\,u}\,\e^{-(y-2ka)^2/2u}\,dy
\\
&&
\nonumber
\hskip2cm
+ 2(2k)^2a\,\int_0^{a-u}
\frac{1}{\sqrt{2\pi u }}\,\e^{-(y-2ka)^2/2u}\,dy
\\
&&
\nonumber
\hskip-.5cm
=4k \sqrt{u}\, \lp \varphi\lp\frac{2ka}{\sqrt u}\rp-
\varphi\lp\frac{a-u-2ka}{\sqrt u}\rp\rp
\\
&&
\nonumber
\hskip2cm
+ 2(2k)^2a
\lp 
\Phi\lp\frac{a-u-2ka}{\sqrt u}\rp
-\Phi\lp-\frac{2ka}{\sqrt u}\rp\rp.
\end{eqnarray*}
Consequently,
\begin{eqnarray*}
&&
\hskip-1cm
\P^{BES}_0\left(H_{a,-1}>u\right)
=
\sum_{k=-\infty}^\infty {\rm  e}^{2k^2a}\lp I_{1}-I_{2}\rp
\\
&&
\nonumber
\hskip2.15cm
=
\sum_{k=-\infty}^\infty {\rm  e}^{2k^2a}\lp I_{11}+I_{22}\rp
\\
&&
\nonumber
\hskip2.15cm
=
2\,\sum_{k=-\infty}^\infty {\rm  e}^{2k^2a}\Big[\lp\frac{2ku -a+u}{\sqrt
    u}\rp\varphi\lp\frac{a-u-2ka}{\sqrt u}\rp 
\\
&&
\nonumber
\hskip2.5cm
+\lp 1+(2k)^2a\rp \lp
\Phi\lp\frac{a-u-2ka}{\sqrt u}\rp
-\Phi\lp-\frac{2ka}{\sqrt u}\rp\rp\Big], 
\end{eqnarray*}
which is the claimed formula for $b=-1,$ and, by scaling (see (\ref{scale1})) we
may extend this for general $b<0.$

Let now $b=1,$ and consider  
\begin{eqnarray}
\label{m18}
&&
\nonumber
\hskip-.7cm
\P^{BES}_0\left(H_{a,1}>u\rp
=\int_{0}^{a+u}\P^{BES}_{0,u,y}\lp \om_s <s+a \ \forall s\in(0,u)\rp
\\
&&
\hskip6.5cm
\times\, \P^{BES}_0(\om_u\in dy).
\end{eqnarray}
We have (see (\ref{m13}))
\begin{eqnarray}
\label{m19}
&&
\nonumber
\hskip-.7cm
\P^{BES}_{0,u,y}\lp \om_s <s+a \ \forall s\in(0,u)\rp
\\
&&
\hskip1.5cm
=
\lim_{x\downarrow 0}\frac{\P^{BM}_{x,u,y}\lp 0<\om_s <s+a
  \ \forall s\in(0,u)\rp}
{\P^{BM}_{x,u,y}\lp \om_s>0 \ \forall s\in(0,u)\rp}.
\end{eqnarray}
Using (\ref{ae2}) gives (see (\ref{m14}))
\begin{eqnarray*}
&&
\P^{BM}_{x,u,y}\lp 0<\om_s <s+a
  \ \forall s\in(0,u)\rp
\\
&&
\hskip2cm
=
\P^{BM}_{0,u,y-x}\lp -x<\om_s <s+a-x
  \ \forall s\in(0,u)\rp
\\
&&
\hskip2cm
=G\lp x,\frac yu;a-x,1+\frac{a-y}u\rp.
\end{eqnarray*}
To find the explicit form of the function $G,$ let $\hat A_k(x,y), \hat B_k(x,y), \hat C_k(x,y),$ and $\hat D_k(x,y)$ denote now the terms $A_k, B_k, C_k,$ and $D_k$ in (\ref{G}). It holds
\begin{eqnarray*}
&&
\hskip-1cm
\hat A_k(x,y)=\frac 1u\big[ k^2(a+u-y)(a-x)+(k-1)^2xy
\\
&&
\hskip2cm
+k(k-1)((a+u-y)x+(a-x)y)\big],
\end{eqnarray*}
\begin{eqnarray*}
&&
\hskip-1cm
\hat B_k(x,y)=\frac 1u\big[ (k-1)^2(a+u-y)(a-x)+k^2xy
\\
&&
\hskip2cm
+k(k-1)\lp(a+u-y)x+(a-x)y\rp\big],
\end{eqnarray*}
\begin{eqnarray*}
&&
\hskip-.5cm
\hat C_k(x,y)=\frac 1u\big[ k^2\lp(a+u-y)(a-x)+xy\rp
\\
&&
\hskip2cm
+k(k-1)(a-x)y+k(k+1)(a+u-y)x\big],
\end{eqnarray*}
and
\begin{eqnarray*}
&&
\hskip-.5cm
\hat D_k(x,y)=\frac 1u\big[ k^2\lp(a+u-y)(a-x)+xy\rp
\\
&&
\hskip2cm
+k(k+1)(a-x)y+k(k-1)(a+u-y)x\big].
\end{eqnarray*}
Notice that, e.g., $\hat A_k(x,y)$ can be obtained from $A_k(x,y)$ by substituting $-u$ instead of $u$ and multiplying by $-1.$ 

Moreover,
$$
\hat A_k(0,y)=\hat C_k(0,y)=\frac au\lp k^2(a+u)-ky\rp,
$$
$$
\hat B_{k+1}(0,y)=\hat D_k(0,y)=\frac au\lp k^2(a+u)+ky\rp,
$$
$$
\frac{\partial \hat A_k}{\partial x}(x,y)
=-k\frac {a+u}u+\frac yu,
$$
$$
\frac{\partial B_k}{\partial x}
=(k-1)\frac {a+u}u+\frac yu,
$$
and
$$
\frac{\partial \hat C_k}{\partial x}(x,y)=
-\frac{\partial \hat D_k}{\partial x}(x,y)
=k\frac{a+u}{u}.
$$
Consequently, as in (\ref{m17}),
\begin{eqnarray}
\label{m20}
&&
\nonumber
\P^{BES}_{0,u,y}\lp\om_t<a+t\quad \forall\ t\in(0,u)\rp
\\
&&
\nonumber
\hskip1cm
=
1+\frac{2(a+u)}{y}\,\sum_{k=1}^\infty k\, 
{\rm  e}^{-2k^2a}{\rm e}^{-2k^2a^2/u}\lp {\rm  e}^{-2kay/u}- {\rm  e}^{2kay/u}\rp
\\
&&
\nonumber
\hskip4cm
+
\sum_{k=1}^\infty 
{\rm  e}^{-2k^2a}{\rm e}^{-2k^2a^2/u}\lp {\rm  e}^{2kay/u}+ {\rm  e}^{-2kay/u}\rp
\\
&&
\hskip1cm
=
\sum_{k=-\infty}^\infty \lp 1-\frac {2k(a+u)}{y}\rp 
{\rm  e}^{-2k^2a}{\rm e}^{-2k^2a^2/u}{\rm  e}^{2kay/u},
\end{eqnarray}
and, further,
\begin{eqnarray*}
&&
\hskip-1cm
\P^{BES}_0\left(H_{a,1}>u\right)
=
2\,\sum_{k=-\infty}^\infty {\rm  e}^{-2k^2a}\Big[\lp\frac{-2ku -a-u}{\sqrt
    u}\rp\varphi\lp\frac{a+u-2ka}{\sqrt u}\rp 
\\
&&
\nonumber
\hskip2.5cm
+\lp 1-(2k)^2a\rp \lp
\Phi\lp\frac{a+u-2ka}{\sqrt u}\rp
-\Phi\lp-\frac{2ka}{\sqrt u}\rp\rp\Big] 
\end{eqnarray*} 
and, by scaling, see (\ref{scale1}), we
may extend this for general $b\geq 0.$ Since formulas
(\ref{nn1})-(\ref{nn4}) follow easily, the proof of the theorem is now
complete.  
\end{proof}
\begin{remark}
It is easily checked that taking $b=0$ in (\ref{m11}) and differentiating yields formula 
(\ref{a11}). Moreover, letting $y\downarrow 0$ in (\ref{m20}) and (\ref{m17}), respectively, 
we obtain the following formula with $b=\pm 1$ (if $b<0$ then the formula is valid for $u<a/(-b)$)
\begin{eqnarray}
\label{1e}
&&
\nonumber
\hskip-1cm
\P^{BES}_{0,u,0}\lp\om_t<a+bt\quad \forall\ t\in(0,u)\rp
\\
&&
\hskip1cm
=1+
2\sum_{k=1}^\infty 
\lp 1-4k^2\frac{a(a+b u)}{u}\rp {\rm  e}^{-2k^2a(a+b u)/u}.
\end{eqnarray}
The case with general $b$ can be proved, e.g., via scaling. Notice that this probability is a function of $a(a+bu)/u$ only. In fact, see Theorem  \ref{t30} 
$$
\P^{BES}_{0,u,0}\lp\om_t<a+bt\quad \forall\ t\in(0,u)\rp
=
\P^{BES}_{0,1,0}\lp \sup_{t\leq 1} \omega_t<\sqrt{\frac{a(a+bu)}{u}}\rp
$$
Putting in (\ref{1e}) $b=0$ gives the well known formula for the distribution of the maximum of a Brownian excursion found in Chung \cite{chung76}, see also Kennedy \cite{kennedy76b}, Durrett and Iglehart \cite{durrettiglehart77}, Biane and Yor \cite{bianeyor87} and Pitman and Yor \cite{pitmanyor99b}.
The distribution of the maximum of a general Bessel process is discussed in Theorem \ref{t30}.
\end{remark}

Next we consider the distribution of $H_{a,b}$ for a three dimensional Bessel process initiated above the line $t\mapsto a+bt.$ In this case, the density can be found using (local) absolute continuity of the  probability measure induced by the Bessel process with respect to the probability measure induced by a killed Brownian motion. 

\begin{thm}
\label{t41}
For $x>a>0$  and $b>0$ it holds 
\begin{eqnarray}
\label{m111}
&&
\P^{BES}_x\left(H_{a,b}\in dt\right)=\frac{(a+bt)(x-a)}{tx\sqrt{2\pi t}}
{\rm e}^{-(a-x-bt)^2/2t}\,dt.
\end{eqnarray}
In particular,
\begin{eqnarray}
\label{m112}
&&
\P^{BES}_0\left(H_{0,b}\in dt\right)=\frac{b}{\sqrt{2\pi t}}
{\rm e}^{-b^2t/2}\,dt.
\end{eqnarray}
\end{thm}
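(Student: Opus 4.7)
The plan is to use Doob's $h$-transform relation (\ref{idd}) between $\P^{BES}_x$ and $\P^{BM}_x$ killed at $0$, together with the harmonic function $h(\omega)=\omega$, so as to reduce the problem to the classical first passage density of Brownian motion across the linear boundary $t\mapsto a+bt$.

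A preliminary observation is that for $x>a>0$ and $b>0$, under $\P^{BM}_x$ we have $H_{a,b}<H_0$ almost surely on $\{H_{a,b}<\infty\}$: since the path starts at $x>a$, above the line, and the line $a+bt$ is strictly positive, continuity forces $\omega_s>a+bs>0$ for $s<H_{a,b}$. Consequently $\omega_{H_{a,b}\wedge H_0}=\omega_{H_{a,b}}=a+bH_{a,b}$, and extending (\ref{idd}) from a fixed time to the stopping time $H_{a,b}$ (by truncating with $H_{a,b}\wedge T$ and letting $T\to\infty$) yields
\[
\P^{BES}_x(H_{a,b}\in dt)=\E^{BM}_x\!\left(\frac{\omega_{H_{a,b}}}{x};\,H_{a,b}\in dt\right)=\frac{a+bt}{x}\,\P^{BM}_x(H_{a,b}\in dt).
\]

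Next I would invoke the classical first-passage density of Brownian motion across a linear boundary. The process $\tilde\omega_t:=\omega_t-bt$ is, under $\P^{BM}_x$, a Brownian motion with drift $-b$ starting at $x$, and $H_{a,b}$ is its first passage to the fixed level $a$; Girsanov applied to the standard Bachelier formula gives
\[
\P^{BM}_x(H_{a,b}\in dt)=\frac{x-a}{\sqrt{2\pi t^3}}\,\exp\!\left(-\frac{(a-x+bt)^2}{2t}\right)dt.
\]
Multiplying by $(a+bt)/x$ yields the density (\ref{m111}). For the corollary (\ref{m112}) I would pass to the limit $a,x\downarrow 0$ in (\ref{m111}): the prefactor simplifies to $\frac{(bt)(x)}{xt\sqrt{2\pi t}}=\frac{b}{\sqrt{2\pi t}}$ and the exponential tends to $e^{-b^2t/2}$; the weak convergence $\P^{BES}_x\Rightarrow\P^{BES}_0$ as $x\downarrow 0$, applied to a bounded continuous functional of the path, legitimizes the passage.

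The main obstacle is the careful justification of applying (\ref{idd}) at the stopping time $H_{a,b}$, which one handles by optional stopping on the martingale $\omega_{t\wedge H_0}/x$ combined with dominated/monotone convergence. The pathwise fact $H_{a,b}<H_0$ established above is precisely what makes this step clean, since the absorbing barrier at $0$ never interferes with the event of interest; the underlying Brownian hitting density is then a textbook calculation that I would not belabor.
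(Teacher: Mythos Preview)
Your argument is correct and follows exactly the route of the paper's proof: invoke the $h$-transform identity (\ref{idd}) together with the pathwise observation $H_{a,b}<H_0$ to reduce to the classical Brownian first-passage density across a straight line, and then pass to the limit $a=0,\ x\downarrow 0$ (using that $0$ is an entrance boundary) for (\ref{m112}). The only cosmetic differences are that you spell out the optional-stopping justification for applying (\ref{idd}) at $H_{a,b}$, and that your exponent $(a-x+bt)^2=(x-a-bt)^2$ is in fact the correct one---the $(a-x-bt)^2$ printed in (\ref{m111}) is a sign slip, as one checks by noting your density integrates to $1$ over $t>0$ whereas the printed version does not.
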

\begin{proof} Since $H_{a,b}<H_0$ for $b>0,$ we obtain, using absolute continuity (\ref{idd}),
\begin{eqnarray*}
&&
\P^{BES}_x\left(H_{a,b}\in dt\right)=
\E^{BM}_x\left(\frac{\omega_{H_{a,b}}}{x}\,;\,H_{a,b}\in dt\right)
\\
&&
\hskip3cm
=\frac{a+bt}{x}\P^{BM}_x\left(H_{a,b}\in dt\right)
\\
&&
\hskip3cm
=\frac{(a+bt)(x-a)}{tx\sqrt{2\pi t}}
{\rm e}^{-(a-x-bt)^2/2t}\, dt
\end{eqnarray*}
by the well known formula for the density of the hitting time of straight lines for Brownian motion (see, e.g., \cite{borodinsalminen02} p. 295). For formula (\ref{m112}), recall that 0 is an entrance boundary point for the three dimensional Bessel process and, hence, (\ref{m112}) is obtained by taking in (\ref{m111}) $a=0$ and letting $x\to 0.$
\end{proof}

\begin{remark}
\label{piyo}
For Bessel processes with dimension parameter $\delta>2$ we have that $H_{0,b}>0$ $\P^{BES}_0$-a.s. for all $b>0$ (see Shiga and Watanabe \cite{shigawatanabe73} Theorem 3.3). 
On the other hand, it is also well known that $\omega_t/t\to 0$ $\P_x^{BES(\delta)}$-a.s. for all $x\geq 0.$ Consequently, $0<H_{a,b}<\infty$ a.s. for all $a\geq 0$ and $b>0.$ From Pitman and Yor \cite{pitmanyor81} Section 8 p. 332 the following result for hitting times of lines going through the origin holds
\begin{equation}
\label{py1}
\P_x^\delta\left(H_{0,b}\in dt\right)/dt =(bt)^\nu\,{\rm e}^{-\frac{1}{2}\left(b^2t+x^2/t\right)}/{\left(2tx^\nu\,K_\nu(xb)\right)}, \qquad x>0, 
\end{equation}
and
\begin{equation}
\label{py2}
\P_0^\delta\left(H_{0,b}\in dt\right)/dt ={\left(\dfrac{b^2}{2}\right)^\nu\,t^{\nu-1}{\rm e}^{-\frac{b^2}{2}t}}/{\Gamma(\nu)}, 
\end{equation}
where $\nu=(\delta-2)/2$ and $K_\nu$ is the modified Bessel function of the second kind (see Abramowitz and Stegun \cite{abramowitzstegun70} p. 374).
Since $K_{1/2}(x)=\sqrt{\pi}{\rm e}^{-x}/\sqrt{2x}$ and $\Gamma(1/2)=\sqrt{\pi}$ it is seen that 
formulas (\ref{py1}) and (\ref{py2}) coincide with  (\ref{m111}) and (\ref{m112}) in case $\delta=3$ and $a=0.$
\end{remark}

Alili and Patie derive in \cite{alilipatie10} an expression for the
density of hitting time of a Bessel process to a straight line.  We
wish to compare their formulas in case of RBM and BES with the
formulas presented in Theorems \ref{t3} and \ref{t4}. The next result
is Theorem 5.1 in \cite{alilipatie10} (we have, however, corrected a
misprint in \cite{alilipatie10} by putting in  minus signs in front of $b$ and $b^2$ in the
exponent of the first exponential). 

\begin{thm}
\label{alpa}
For a Bessel process of dimension parameter $\delta>0$ and initial state $0\leq x\leq a$ it holds 
\begin{eqnarray}
\label{ap}
&&
\nonumber
\hskip-1.5cm
\P^{BES(\delta)}_x\left(H_{a,b}\in dt\right)/dt =\frac{{\rm e}^{\,-\frac{b}{2a}(a^2-x^2)-
\frac{b^2}{2}\,t}}{(1+\frac{b}{a}\,t)^{\nu+2}}
\\
&&
\hskip2cm
\times 
\sum_{k=1}^\infty 
\frac{x^{-\nu}\,j_{\nu,k}\,J_\nu\left(j_{\nu,k}\frac{x}{a} \right) }
{a^{2-\nu}\,J_{\nu+1}\left(j_{\nu,k} \right)} 
\,{\rm e}^{\,-j_{\nu,k}^2\frac{t}{2a(a+bt)}},
\end{eqnarray}
where $b\in\R,$ $\nu=(\delta-2)/2$ and $J_{\nu}$ is the Bessel function of the first kind and  
$j_{\nu,k}$ is the ordered sequence of the positive zeros of $J_{\nu}.$ In particular,
for RBM starting from 0, i.e., $\nu=-1/2$ and
\[ 
J_{-1/2}(z)=\sqrt{\frac{2}{\pi z}}\,\cos z,\quad 
J_{1/2}(z)=\sqrt{\frac{2}{\pi z}}\,\sin z,
 \]
it holds
\begin{eqnarray}
\label{rbma}
&&
\nonumber
\hskip-1cm
\P^{RBM}_0\left(H_{a,b}\in dt\right)/dt =\frac{{\rm e}^{\,-\frac{ba}{2}-\frac{b^2t}2}}
{\sqrt a(a+bt)^{3/2}}
\\
&&
\nonumber
\hskip2cm
\times 
\sum_{k=1}^\infty (-1)^{k+1}(k-\frac 12)\pi 
\,\exp\lp{-((k-\frac{1}{2})\pi)^2\,\frac{t}{2a(a+bt)}}\rp
\\
&&
\hskip2.5cm
=:D_{a,b}(t).
\end{eqnarray}
\end{thm}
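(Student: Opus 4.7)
The plan is to reduce the hitting time problem for the affine boundary $a+bt$ to the classical one for a horizontal boundary, where a spectral (Sturm--Liouville) series is available, and then account for the reduction by an explicit prefactor and a time change. This is the \emph{functional transformation} strategy of Alili and Patie.

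\textbf{Step 1: straightening the boundary.} I would introduce a space--time transformation
\[
\rho_s := \frac{R_{\tau(s)}}{1+b\tau(s)/a}, \qquad s := \int_0^{\tau(s)}\frac{du}{(1+bu/a)^2} = \frac{at}{a+bt},
\]
chosen so that the event $R_t = a+bt$ corresponds to $\rho_s = a$ and so that the quadratic variation of $\rho$ in the new time is again $ds$. An application of It\^o's formula to the Bessel SDE $dR_t = \frac{\delta-1}{2R_t}dt + dB_t$ shows that $\rho$ solves a Bessel SDE of the same dimension $\delta$, perturbed by a linear drift of the form $-\kappa(s)\rho_s\,ds$. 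A Girsanov change of measure removes this drift at the cost of an exponential martingale whose exponent is a linear combination of $\rho_s^2 - x^2$ and of $\int_0^s \rho_u^2\,du$. Evaluated on the hitting event $\{\rho_s = a\}$ and combined with the time-change Jacobian $ds/dt = (1+bt/a)^{-2}$ and with the rescaling of the eigenfunctions by the moving normalisation $(1+bt/a)$, one obtains (after careful bookkeeping) the prefactor $(1+bt/a)^{-(\nu+2)}\exp\!\bigl(-\tfrac{b}{2a}(a^2-x^2)-\tfrac{b^2t}{2}\bigr)$ appearing in (\ref{ap}).

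\textbf{Step 2: spectral expansion at a horizontal boundary.} Once the problem has been reduced to computing $\P^{BES(\delta)}_x(H_{a,0}\in ds)$, a classical eigenfunction expansion applies. The Bessel generator $\tfrac12 \partial_r^2 + \tfrac{\delta-1}{2r}\partial_r$ killed at $r=a$, with Dirichlet condition at $a$ and the natural condition at $0$, admits the orthonormal basis $\phi_k(r) = r^{-\nu} J_\nu(j_{\nu,k}r/a)$ with eigenvalues $-j_{\nu,k}^2/(2a^2)$, normalised through $\int_0^a r\,J_\nu(j_{\nu,k}r/a)^2\,dr = (a^2/2)\,J_{\nu+1}(j_{\nu,k})^2$. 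Expanding the killed transition density and computing the boundary flux yields
\[
\P^{BES(\delta)}_x(H_{a,0}\in ds)/ds = \sum_{k=1}^\infty \frac{x^{-\nu}\,j_{\nu,k}\,J_\nu(j_{\nu,k}x/a)}{a^{2-\nu}\,J_{\nu+1}(j_{\nu,k})}\,\e^{-j_{\nu,k}^2 s/(2a^2)}.
\]
Substituting back $s = at/(a+bt)$ turns the exponent $-j_{\nu,k}^2 s/(2a^2)$ into $-j_{\nu,k}^2 t/(2a(a+bt))$, and multiplying by the prefactor produced in Step~1 gives formula (\ref{ap}).

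\textbf{Step 3: RBM corollary.} For reflecting Brownian motion, $\delta = 1$ and $\nu = -1/2$; the identity $J_{-1/2}(z) = \sqrt{2/(\pi z)}\cos z$ gives $j_{-1/2,k} = (k-\tfrac12)\pi$, while $\sin((k-\tfrac12)\pi) = (-1)^{k+1}$ yields $J_{1/2}(j_{-1/2,k}) = \sqrt{2/(\pi^2(k-\tfrac12))}\,(-1)^{k+1}$. Starting from $x=0$, the limit $x^{1/2}J_{-1/2}(j_{-1/2,k}x/a)\to \sqrt{2a/(\pi j_{-1/2,k})}$ converts the general series into the alternating trigonometric series of (\ref{rbma}).

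\textbf{Main obstacle.} The delicate point is Step~1: verifying that the Girsanov density, after integration against the killed distribution of $\rho$ and combination with the Jacobian of the time change, produces \emph{exactly} the prefactor $(1+bt/a)^{-(\nu+2)}\exp(-\tfrac{b}{2a}(a^2-x^2)-\tfrac{b^2 t}{2})$. This requires rewriting the stochastic integral in the Girsanov exponent using It\^o in the form $\log\rho_s - \log x$ plus a quadratic-variation correction involving $\int_0^s \rho_u^{-2}du$, and then using the terminal condition $\rho_s = a$ on the hitting event to eliminate residual path integrals. Once this bookkeeping is carried out, the identification with (\ref{ap}) is immediate.
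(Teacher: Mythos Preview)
The paper does not prove this theorem at all: it is quoted verbatim (with a sign correction) as Theorem~5.1 of Alili and Patie \cite{alilipatie10}, and no argument is given in the present paper beyond the citation. So there is no ``paper's own proof'' to compare against.

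Your outline is precisely the Alili--Patie functional transformation method, which is the proof in the original source you are citing. The scheme you describe is correct in its architecture: the deterministic time change $s=at/(a+bt)$ together with the space rescaling $\rho_s=R_t/(1+bt/a)$ sends the affine boundary to the constant level $a$; the resulting process is a radial Ornstein--Uhlenbeck process (Bessel plus linear drift in the radial square), which is absolutely continuous with respect to the $\delta$-Bessel law with an explicit Radon--Nikodym density; and the spectral series for $\P^{BES(\delta)}_x(H_{a,0}\in ds)$ in Step~2 is the classical Kent/Borodin--Salminen expansion. Your Step~3 specialization to $\nu=-1/2$ is a routine substitution.

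The only place your sketch is genuinely incomplete is the one you flag yourself: in Step~1 you do not actually carry out the Girsanov computation, and the claim that ``residual path integrals'' are eliminated on the hitting event needs to be checked rather than asserted. In the Alili--Patie argument this is handled cleanly by working with the squared Bessel process and the known absolute continuity between $BESQ(\delta)$ and the squared radial OU process, whose density on $\cC_t$ is $\exp\!\bigl(-\tfrac{\mu}{2}(\omega_t-\omega_0)+\tfrac{\mu\delta}{2}t-\tfrac{\mu^2}{2}\int_0^t\omega_s\,ds\bigr)$ for an appropriate $\mu$; evaluating this on $\{H_a=s\}$ and pushing through the time change produces the prefactor directly, with no leftover stochastic integrals. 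If you replace your heuristic ``It\^o plus bookkeeping'' paragraph by that computation, the proof is complete.
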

Comparing the results in Theorems \ref{alpa} and \ref{t3}, we obtain, with our notations in   (\ref{rbma}) and (\ref{ab-2}), that
\begin{equation}
\label{problem}
D_{a,b}(t)=\Delta_{a,b}(t).
\end{equation}
This is a Poisson summation type identity to which we would like to devote attention in a forthcoming work \cite{salminenyor11}. For the moment, we only refer to Feller 
\cite{feller71} and  Biane, Pitman and Yor \cite{bianepitmanyor01} for some related discussion.

\vskip1cm

\hskip3cm
Figure 1 about here.

\vskip1cm

\vskip1cm

\hskip3cm
Figure 2 about here.

\vskip1cm

\begin{figure}[h]
\begin{center}
\epsfig{file=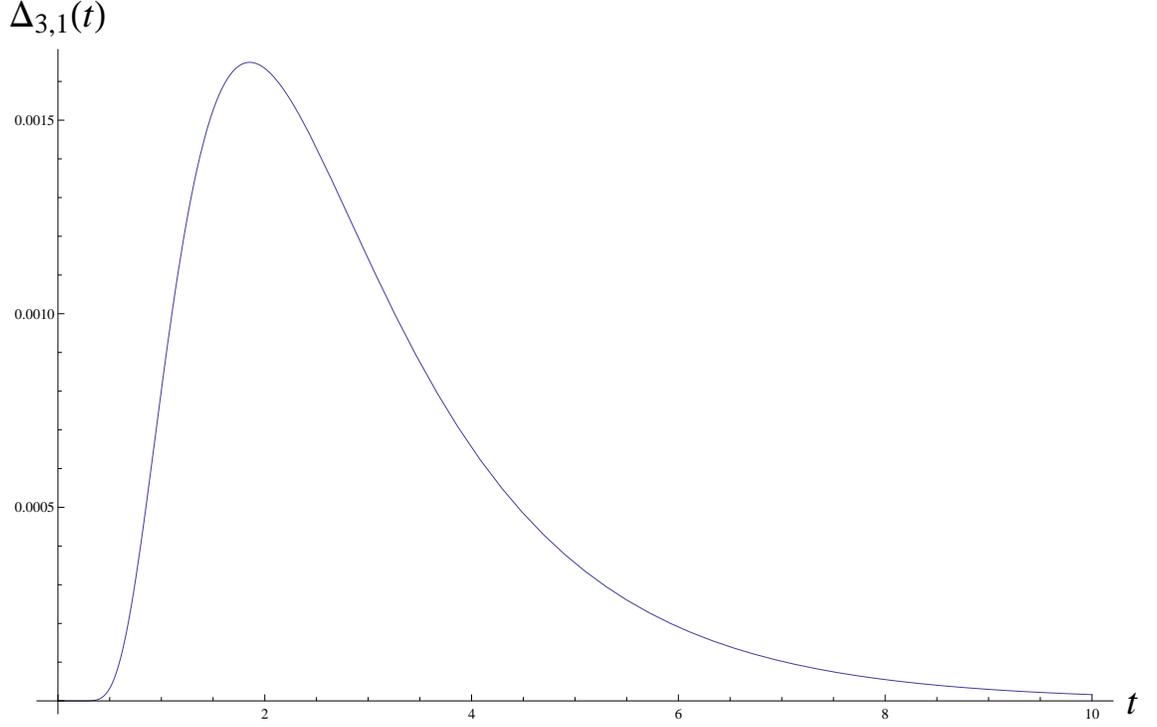,width=15cm}
\end{center}
\caption{\em The density of the distribution of the first 
hitting time of $t\mapsto 3+t$ by RBM started from 0.}
\end{figure}
\begin{figure}[h]
\begin{center}
\epsfig{file=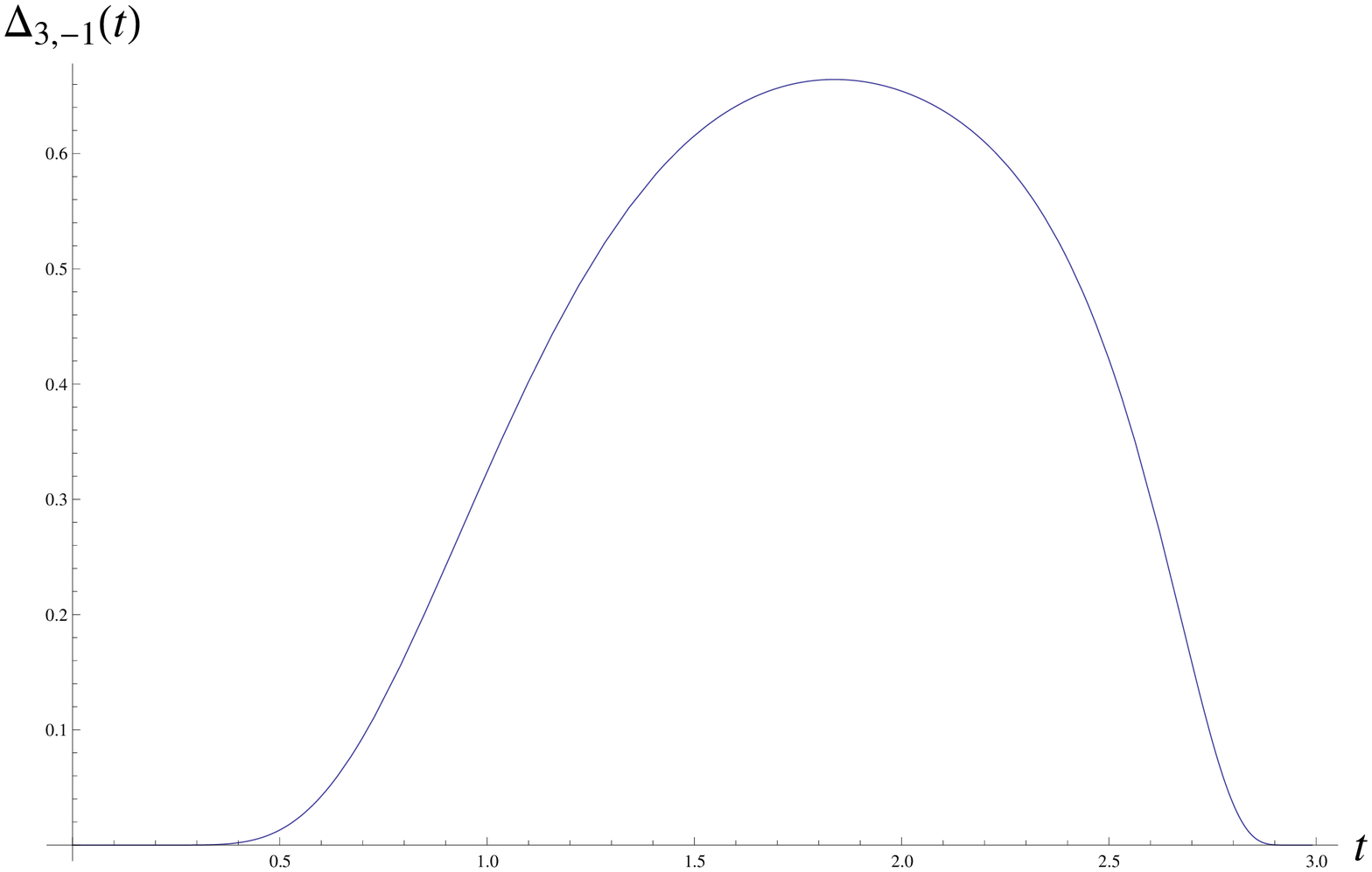,width=15cm}
\end{center}
\caption{\em The density of the distribution of the first 
hitting time of $t\mapsto 3-t$ by RBM started from 0.}
\end{figure}

\section{Results for general Bessel processes}
\label{sec5}

In this section we identify   first the probability that a Bessel bridge crosses an
affine boundary with the probability that the maximum of the Bessel
bridge is below some value. For brevity, we change the notation used above and let $\P^{(\delta)}_x$ denote the probability of a Bessel process with dimension parameter $\delta$ initiated from $x\geq 0.$ Moreover, $\P^{(\delta)}_{0,u,0}$ denotes the probability measure of a Bessel bridge from 0 to 0 of length $u$. For $0<\delta<2$ the Bessel process is
assumed to be instantly reflecting at 0. Secondly, we show that the probability for a Bessel process to stay below the straight line $t\mapsto a+t$ equals to the probability that the maximum of the  corresponding Bessel bridge from 0 to 0 of length 1 is less than $\sqrt a$.

\begin{thm}
\label{t30}
For $a>0,$ $u>0,$ and $b<a/u$ 
\begin{equation} 
\label{bes00}
\P^{(\delta)}_{0,u,0}\left(\sup_{t\leq u}\{\omega_t+bt\}<a\right)=
\P^{(\delta)}_{0,u,0}\left(\sup_{t\leq
  u}\{\omega_t\}<\sqrt{a(a-bu)}\right).
\end{equation}
\end{thm}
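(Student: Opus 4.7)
The plan is to reduce the identity, via a time-change representation of the Bessel bridge as a Bessel process, to the elementary Bessel-scaling fact that for BES$(\delta)$ started at $0$, the probability of never crossing an affine boundary $t\mapsto \alpha+\beta t$ (with $\alpha,\beta>0$) depends only on the product $\alpha\beta$.

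First, by the scaling of Bessel bridges, $\{\omega_t:0\le t\le u\}\rr\{\sqrt u\,\tilde\omega_{t/u}:0\le t\le u\}$ under $\P^{(\delta)}_{0,u,0}$ with $\tilde\omega$ of law $\P^{(\delta)}_{0,1,0}$, the substitutions $a'=a/\sqrt u$ and $b'=b\sqrt u$ give $a'(a'-b')=a(a-bu)/u$, so that it suffices to establish (\ref{bes00}) in the case $u=1$, under the constraint $b<a$.

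Next, I will use the representation
\[
\tilde R_t:=(1-t)\,R_{t/(1-t)},\qquad t\in[0,1),\quad \tilde R_1:=0,
\]
where $R$ has law $\P^{(\delta)}_0$; a short It\^o computation shows that $\tilde R$ satisfies
\[
d\tilde R_t=d\tilde W_t+\frac{\delta-1}{2\tilde R_t}\,dt-\frac{\tilde R_t}{1-t}\,dt,
\]
the SDE of the BES$(\delta)$ bridge from $0$ to $0$ of length $1$, with the characteristic bridge drift $-\tilde R_t/(1-t)$ and instantaneous reflection at $0$ preserved by the time change when $0<\delta<2$. Putting $s=t/(1-t)$, so that $1+s=1/(1-t)$, the identity $\tilde R_t+bt=(R_s+bs)/(1+s)$ gives
\[
\sup_{t\in[0,1]}\{\tilde R_t+bt\}<a\iff R_s<a+(a-b)s\ \ \forall\,s\ge 0,
\]
and similarly
\[
\sup_{t\in[0,1]}\tilde R_t<\sqrt{a(a-b)}\iff R_s<\sqrt{a(a-b)}(1+s)\ \ \forall\,s\ge 0.
\]
Both sides of (\ref{bes00}) are thereby recast as non-crossing probabilities of affine boundaries for the Bessel process on $[0,\infty)$.

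Finally, the Bessel scaling $\{R_s\}\rr\{\gamma R_{s/\gamma^2}\}$, applied with $\gamma=\sqrt{\alpha/\beta}$ (legitimate since $\alpha=a>0$ and $\beta=a-b>0$), puts any such probability in its symmetric form,
\[
\P^{(\delta)}_0\lp R_s<\alpha+\beta s\ \forall\,s\ge 0\rp=\P^{(\delta)}_0\lp R_u<\sqrt{\alpha\beta}\,(1+u)\ \forall\,u\ge 0\rp,
\]
showing it depends only on the product $\alpha\beta$. Taking $(\alpha,\beta)=(a,a-b)$ yields $\alpha\beta=a(a-b)$ in each of the two rewritten events, so they are of equal probability. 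I expect the only delicate point to be the rigorous justification of the bridge representation $\tilde R_t=(1-t)R_{t/(1-t)}$, in particular the boundary behaviour at $0$ for $0<\delta\le 2$; once that representation is in place, the remainder is a change of variables combined with BES scaling.
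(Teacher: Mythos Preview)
Your proof is correct and follows essentially the same route as the paper: both sides are rewritten, via a bridge-to-process representation, as probabilities that a BES$(\delta)$ process started at $0$ stays below an affine boundary $s\mapsto\alpha+\beta s$ with $\alpha,\beta>0$, and then Bessel scaling shows this probability depends only on the product $\alpha\beta$. The only difference is organizational: the paper treats $b>0$ and $b<0$ separately (using time reversal of the bridge for $b>0$ before applying time inversion, and conditioning on $\omega_{1/u}=0$), whereas your direct use of the representation $\tilde R_t=(1-t)R_{t/(1-t)}$ together with the preliminary reduction to $u=1$ handles both signs of $b$ uniformly. That representation is precisely the one the paper invokes (as ``well known'') in the proof of Theorem~\ref{t31}, so your concern about justifying it is already addressed there.
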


\begin{proof}
Consider first the case $b>0$ and let $\Delta_l$ denote the left hand side of
(\ref{bes00}). We have
\begin{eqnarray*}
&&
\Delta_l=\P^{(\delta)}_{0,u,0}\left(\omega_t\leq a-bt\quad \forall 0<t<u\rp
\\
&&
\hskip.5cm
=\P^{(\delta)}_{0,u,0}\left(\omega_t\leq a-bu+bt\quad \forall 0<t<u\rp
\\
&&
\hskip.5cm
=\P^{(\delta)}_0\left(\omega_t\leq a-bu+bt\quad \forall 0<t<u\,|\, \omega_u=0\rp,
\end{eqnarray*}
where in the second step the time reversal property of diffusion
bridges is used (cf. (\ref{timrev})).

By the time inversion of Bessel processes we obtain 
\begin{eqnarray*}
&&
\Delta_l=\P^{(\delta)}_0\left(t\,\omega_{1/t}\leq a-bu+bt\quad \forall\ 0<t<u\,|\,u\, \omega_{1/u}=0\rp,
\\
&&
\hskip.5cm
=\P^{(\delta)}_0\left(\omega_{1/t}\leq (a-bu)\frac 1t+b\quad \forall\ \frac 1t>\frac 1u\,|\, \omega_{1/u}=0\rp,
\\
&&
\hskip.5cm
=\P^{(\delta)}_0\left(\omega_{s}\leq (a-bu)(s+\frac 1u)+b\quad \forall\ s>0\rp
\\
&&
\hskip.5cm
=\P^{(\delta)}_0\left(\omega_{s}\leq (a-bu)s+\frac au\quad \forall\ s>0\rp.
\end{eqnarray*}
Letting $\Delta_r$ denote the right hand side of
(\ref{bes00}) we write
\begin{eqnarray}
\label{bes01}
&&
\nonumber
\Delta_r=\P^{(\delta)}_{0,u,0}0\left(\omega_t\leq \sqrt{a(a-bu)}\quad \forall\ 0<t<u\rp
\\
&&
\nonumber
\hskip.5cm
=\P^{(\delta)}_0\left(t\,\omega_{1/t}\leq  \sqrt{a(a-bu)}\quad \forall\ \frac 1t>\frac 1u\,|\,u\,\omega_{1/u}=0\rp
\\
&&
\nonumber
\hskip.5cm
=\P^{(\delta)}_0\left(\omega_s\leq  \sqrt{a(a-bu)}\,s\quad \forall\ s>\frac 1u\,|\, \omega_{1/u}=0\rp
\\
&&
\nonumber
\hskip.5cm
=\P^{(\delta)}_0\left(\omega_s\leq  \sqrt{a(a-bu)}\lp s+\frac 1u\rp\quad \forall\ s>0\rp
\\
&&
\hskip.5cm
=\P^{(\delta)}_0\left(\omega_s\leq  \sqrt{a(a-bu)}\lp s+\frac 1u\rp\quad \forall\ s>0\rp.
\end{eqnarray}
Recall that Bessel processes enjoy the  Brownian scaling property: 

\noindent
for all $c>0$ under 
$\P^{(\delta)}_0$
\begin{equation}
\label{bes02}
\{\sqrt c\, \omega_{t/c}:t\geq 0\}\,\rr\,\{\omega_{t}:t\geq 0\}.
\end{equation}
Applying this with $c=(a-bu)/a$ in (\ref{bes01}) yields
\begin{eqnarray*}
&&
\Delta_r=
\P^{(\delta)}_0\left(\omega_{as/(a-bu)}\leq  as+\frac au \quad \forall\ s>0\rp
\\
&&
\hskip.5cm
=
\P^{(\delta)}_0\left(\omega_{t}\leq  (a-bu)t+\frac au \quad \forall\ t>0\rp,
\end{eqnarray*}
which equals $\Delta_l.$ 

For $b<0$ the proof is slightly simpler since we do not use
time reversal. Indeed, letting again $\Delta_l$ and $\Delta_r$ denote the
right and left hand side of  (\ref{bes00}), respectively, we have
by time inversion
\begin{eqnarray}
\label{bes021}
&&
\nonumber
\Delta_l=\P^{(\delta)}_{0,u,0}\left(\omega_t\leq a-bt\quad \forall\ 0<t<u\rp
\\
&&
\nonumber
\hskip.5cm
=\P^{(\delta)}_0\left(\omega_{1/t}\leq \frac at-b\quad \forall\ \frac 1t>\frac 1u\,\Big
| \omega_{1/u}=0\rp
\\
&&
\hskip.5cm
=\P^{(\delta)}_0\left(\omega_{s}\leq as+\frac{a-bu}{u}\quad \forall\ s>0\rp.
\end{eqnarray}
For $\Delta_r$ it holds (again) by time inversion
\begin{eqnarray*}
&&
\Delta_r
=\P^{(\delta)}_0\left(\omega_{s}\leq \sqrt{a(a-bu)}\lp s+\frac 1u\rp\quad \forall\ s>0\rp.
\end{eqnarray*}
Using hereby scaling (\ref{bes02}) with $c=a/(a-bu)$ we obtain
\begin{eqnarray*}
&&
\Delta_r
=
\P^{(\delta)}_0\left(\omega_{(a-bu)s/a}\leq (a-bu)s+\frac {a-bu}{u}\quad \forall\ s>0\rp,
\end{eqnarray*}
and substituting $t=(a-bu)s/a$ it is seen that $\Delta_r=\Delta_l,$ thus
completing the proof.
\end{proof}

\begin{remark}
We note that for $b\in\R,$ the result of the theorem may be stated as 
$$
M_b\rr\sqrt{M^2_0+\frac{b^2}{4}}-\frac{b}{2},
$$
where $M_b:=\sup_{t\leq 1}\{\omega_t-bt\}.$
\end{remark}

The next result is a slight extension of Exercise 3.10 in Revuz and Yor \cite{revuzyor01} where
reflecting Brownian motion case is considered. We present here also the proof for the readers' convenience.

\begin{thm}
\label{t31}
For Bessel processes with $\delta>0$ it holds 
\begin{equation}
\label{e310}
\P^{(\delta)}_0\left(\sup_ {t\geq 0}\left\{\omega_t-t\right\}<a\right)
=
\P^{(\delta)}_{0,1,0}\left(\sup_{u\leq 1}\left\{\omega_u\right\}<\sqrt{a}\right).
\end{equation}
In other words, letting $R=\{R_t:t\geq 0\}$ denote a Bessel process started from 0 with
dimension parameter $\delta>0$ and $R^{0,1,0}=\{R^{0,1,0}_u\,:\,0\leq u\leq 1\}$ the corresponding Bessel bridge from 0 to 0 of length 1. Then 
\begin{equation}
\label{e31}
\sup_ {t\geq 0}\left\{R_t-t\right\}\,\rr\,\sup_{u\leq 1}\Big\{\lp R^{0,1,0}_u\rp^2\Big\}.
\end{equation}

\end{thm}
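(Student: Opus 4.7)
The strategy mirrors the proof of Theorem~\ref{t30}: combine the time inversion property of Bessel processes, the Markov property at time $1$, and Brownian scaling, in order to reduce both sides of (\ref{e310}) to the \emph{same} ``stay below an affine boundary'' probability for the unconditioned Bessel process. Set
\[
\Lambda_l := \P^{(\delta)}_{0}\bigl(\omega_t < a + t\ \forall\, t>0\bigr), \qquad \Lambda_r := \P^{(\delta)}_{0,1,0}\bigl(\omega_u < \sqrt{a}\ \forall\, 0<u<1\bigr);
\]
I aim to show that both quantities equal $\P^{(\delta)}_0\bigl(\omega_q < \sqrt{a}(1+q)\ \forall\, q>0\bigr)$.

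For $\Lambda_l$, I first apply the time inversion $\{\omega_t\}\rr\{t\,\omega_{1/t}\}$ of (\ref{timeinv}); after the substitution $s=1/t$ this gives $\Lambda_l = \P^{(\delta)}_0(\omega_s < as + 1\ \forall\, s>0)$. A Brownian scaling $\{\sqrt{c}\,\omega_{t/c}\}\rr\{\omega_t\}$ from (\ref{bes02}) with $c=1/a$, followed by the change of variable $q = as$, then yields $\Lambda_l = \P^{(\delta)}_0(\omega_q < \sqrt{a}(1+q)\ \forall\, q>0)$.

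For $\Lambda_r$, I would use the defining identification $\Lambda_r = \P^{(\delta)}_0(\omega_u < \sqrt{a}\ \forall\, 0<u<1\mid \omega_1=0)$ and apply time inversion under this conditioning: the event $\{\omega_1 = 0\}$ is preserved (since $1\cdot\omega_{1/1}=\omega_1$), whereas $\{\omega_u<\sqrt{a}\}$ for $u\in(0,1)$ becomes $\{u\,\omega_{1/u} < \sqrt{a}\}$, i.e.\ $\{\omega_v<\sqrt{a}\,v\}$ for $v=1/u>1$. The Markov property of the Bessel process at time~$1$, applied to $\{\omega_{1+r}\}_{r\ge 0}$ conditioned on $\omega_1=0$, returns a fresh Bessel process from $0$, giving $\Lambda_r = \P^{(\delta)}_0(\omega_r<\sqrt{a}(1+r)\ \forall\, r>0)$, which matches the formula obtained for $\Lambda_l$.

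The step requiring most care is the conditioning $\omega_1=0$ in the regime $0<\delta<2$, where $0$ is a regular reflecting boundary: the identification of $\Lambda_r$ with the conditional probability has to be done through disintegration against the density of $\omega_1$, and one must verify that the resulting law on paths coincides with the canonical Bessel bridge measure $\P^{(\delta)}_{0,1,0}$. Once this is granted (together with the time-inversion identity and the strong Markov property, both standard under $\P^{(\delta)}_0$), the matching of the two expressions proves (\ref{e310}); the equivalent reformulation (\ref{e31}) follows at once, since $\sup_u (R^{0,1,0}_u)^2 = (\sup_u R^{0,1,0}_u)^2$.
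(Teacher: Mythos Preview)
Your argument is correct and lands on the same intermediate target as the paper --- both sides reduce to $\P^{(\delta)}_0(\omega_q<\sqrt a\,(1+q)\ \forall q>0)$ --- but the paper takes a somewhat more direct path. For $\Lambda_l$ the paper uses scaling alone, with no time inversion: writing $\P_0(R_t<a+t\ \forall t)=\P_0(\lambda R_t<a+\lambda^2 t\ \forall t)$ and choosing $\lambda=\sqrt a$ gives $\P_0(R_t/(1+t)<\sqrt a\ \forall t)$ in one step. For $\Lambda_r$ the paper avoids conditioning on $\{\omega_1=0\}$ altogether by invoking the known bridge representation $\{R^{0,1,0}_u:0\le u<1\}\rr\{(1-u)R_{u/(1-u)}:0\le u<1\}$ together with the substitution $u=t/(1+t)$, which identifies $\sup_{u\le1}R^{0,1,0}_u$ directly with $\sup_{t\ge0}R_t/(1+t)$. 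Your route through time inversion under conditioning plus the Markov property at time~$1$ is in effect a rederivation of this representation (and indeed mirrors the paper's treatment of Theorem~\ref{t30} rather than of Theorem~\ref{t31}); it is valid but carries the extra burden you correctly flag of justifying the disintegration. The paper sidesteps that by citing the representation as a known fact inherited from the corresponding Brownian bridge formula.
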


\begin{proof}
Consider for $a>0$
\begin{eqnarray*}
&&
\hskip-1cm
\P_0\lp\sup_ {t\geq 0}\{R_t-t\}<a\rp=\P_0\lp \forall
t,\ R_t<a+t\rp
\\
&&
\hskip2.9cm
=\P_0\lp \forall
t,\ R_{\la^2t}<a+\la^2t\rp
\\
&&
\hskip2.9cm
=\P_0\lp \forall
t,\ \la R_{t}<a+\la^2t\rp
\end{eqnarray*}
by scaling for any $\la >0$. Choosing $\la^2=a$ yields
\begin{eqnarray*}
&&
\hskip-1cm
\P_0\lp\sup_ {t\geq 0}\{R_t-t\}<a\rp
=\P_0\lp \forall
t,\ \sqrt a R_{t}<a+a t\rp
\\
&&
\hskip2.9cm
=
\P_0\lp \forall
t,\ \frac {R_t}{1+t}<\sqrt a\rp,
\end{eqnarray*}
i.e.,
\begin{equation}
\label{e32}
\sup_ {t\geq 0}\{R_t-t\}\,\rr\,\lp\sup_ {t\geq 0}\left\{\frac{R_t}{1+t}\right\}\rp^2.
\end{equation}
We now note that
\begin{eqnarray}
\label{e33}
&&
\nonumber
\hskip-.75cm
\sup_{t\geq 0}\left\{\frac{R_t}{1+t}\right\}=\sup_{u\leq 1}\{(1-u)\,R_{u/(1-u)}\}.
\\
&&
\hskip2cm
\,\rr\,\sup_{u\leq 1}\{R^{0,1,0}_u\},
\end{eqnarray}
where, in the first step, we use the substitution  $u=t/(1+t),$ and, for the second step, we use the well known representation
\begin{equation*}
\{R^{0,1,0}_u\,:\, 0\leq u<1\}\,\rr\,\{(1-u)\,R_{u/(1-u)}\,:\, 0\leq u<1\},
\end{equation*}
which is inherited from the corresponding Brownian bridge representation. Claim (\ref{e31}) follows now from (\ref{e32}) and (\ref{e33}).
\end{proof}

\begin{example}
\label{verw}
For 3-dimensional Bessel bridge the distribution of the supremum is
known. In fact, it was proved by Vervaat \cite{vervaat79} that
this supremum is identical in law with the range of a standard
Brownian bridge, i.e., 
$$
\sup_ {t\leq u}\Big\{ R^{0,u,0}_t\Big\}
\,\rr\, 
\sup_ {t\leq u}B^{0,u,0}_t -\inf_ {t\leq u}B^{0,u,0}_t.
$$
The distribution of the range can be deduced from Feller
\cite{feller51}, and we have (see \cite{borodinsalminen02} p. 174) 
$$
\P\lp\sup_ {t\leq u}B^{0,u,0}_t -\inf_ {t\leq u}B^{0,u,0}_t<y\rp
=1+2\sum_{k=1}^\infty\lp 1-4k^2\frac{y^2}u\rp\,\e^{-2k^2y^2/u}.
$$
Consequently, from Theorem \ref{t31} for $d=3$
$$
\P_0(\sup_{t\geq 0}\{R_t-t\}<a)=1+2\sum_{k=1}^\infty\lp 1-
4k^2a\rp\e^{-2k^2a}.
$$ 
\end{example}

\begin{example}
\label{rbm}
For 1-dimensional Bessel bridge, i.e., reflecting Brownian bridge, 
the distribution of the supremum is
also known. From \cite{borodinsalminen02} p. 333 formula 1.1.8 we obtain
$$
\P\lp\sup_ {t\leq u}|B^{0,u,0}_t|<y\rp
=1+2\sum_{k=1}^\infty\lp \e^{-2(2k)^2y^2/u}-\e^{-2(2k-1)^2y^2/u}\rp
$$
and, hence, from Theorem \ref{t31} for $\delta=1$
\begin{eqnarray*}
&&
\P_0(\sup_{t\geq 0}\{|B_t|-t\}<a)
=1+2\sum_{k=1}^\infty\lp \e^{-2(2k)^2a}-\e^{-2(2k-1)^2a}\rp
\\
&&
\hskip3.8cm
=1+2\sum_{k=1}^\infty(-1)^k \e^{-2k^2a}.
\end{eqnarray*}
This formula may also be obtained from (\ref{m02}) by letting $u\to \infty$ and coincides with 
(\ref{de2}).
\end{example}

{\bf Acknowledgement.} We thank Mikhail Stepanov for some numerical
computations and creating the plots of $\Delta_{a,b},$ see Figure 1 and
2.

\bigskip
\bibliographystyle{plain}
\bibliography{yor1}
\end{document}